\documentclass{article}
\usepackage[affil-it]{authblk}
\expandafter\let\csname equation*\endcsname\relax
\expandafter\let\csname endequation*\endcsname\relax

\usepackage[dvipsnames,table]{xcolor}  % Coloured text etc.
\usepackage{natbib}
\setcitestyle{square,numbers}
\usepackage{hyperref}
\newcommand\myshade{85}
\colorlet{mylinkcolor}{violet}
\colorlet{mycitecolor}{YellowOrange}
\colorlet{myurlcolor}{Aquamarine}
\hypersetup{
    colorlinks=true,
    linkcolor  = mylinkcolor!\myshade!black,
    citecolor  = mycitecolor!\myshade!black,
    urlcolor   = myurlcolor!\myshade!black,
}

\usepackage[shortlabels]{enumitem}
\usepackage{caption}
\usepackage{algorithm}
\usepackage{algpseudocode}

\usepackage{bm}
\usepackage{graphicx}
\usepackage{xargs}
\usepackage{subcaption}
\usepackage{multirow}
\usepackage{amsmath}
\usepackage{amssymb}
\usepackage{amsthm}
\usepackage{mathrsfs}
\usepackage{amsbsy}
\usepackage{xspace}
\usepackage{mathtools}
\usepackage{float}
\usepackage{nicefrac}
\usepackage{pifont}
\usepackage{booktabs}
\usepackage{thm-restate}
\usepackage[letterpaper, total={6in, 8in}]{geometry}
\usepackage[capitalise, noabbrev]{cleveref} 
\usepackage{comment}

\usepackage{tikz}
\usetikzlibrary{calc,positioning}

\newtheorem{lemma}{Lemma}

\newtheorem{remark}{Remark}
\newtheorem{proposition}{Proposition}
\newtheorem{theorem}{Theorem}

\renewcommand{\AA}{{ {\mathcal{A}}}} % active collisional pairs
\newcommand{\CC}{{ {\mathcal{C}}}} % Configuration
\newcommand{\GG}{{\mathcal{G}}} 
\newcommand{\MM}{{ {\mathcal{M}}}} % mobility matrix
\newcommand{\MMstk}{{\mathcal{M}}^\mathrm{stk}}
\newcommand{\MMstkop}{\mathcal{M}_\infty^\mathrm{stk}}
\newcommand{\PPop}{\mathcal{P}_\infty}
\newcommand{\PP}{\mathcal{P}}
\newcommand{\UU}{{\mathcal{U}}} % velocity
\newcommand{\FF}{{\mathcal{F}}} % Force
\newcommand{\DD}{{\mathcal{D}}} % Mapping to collision pairs
\newcommand{\R}{\mathbb{R}}
\newcommand{\mat}[1]{\mathbf{#1}}
\renewcommand{\vec}[1]{\boldsymbol{#1}} 
\newcommand{\A}{{ \mat{A}}} % LCP matrix
\newcommand{\x}{{\vec{x}}} % collisional force magnitude
\newcommand{\xk}[1][k]{{\vec{x}^{(#1)}}} % use like \xk for x^{(k)} or \xk[k+1] for x^{(k+1)} (i.e., optional argument, and default value is k)
\newcommand{\z}{\vec{z}}
\renewcommand{\u}{\vec{u}}
\renewcommand{\v}{\vec{v}}

\renewcommand{\b}{\vec{b}}
\renewcommand{\c}{\vec{c}} 
\newcommand{\C}{\mat{C}} 
\newcommand{\D}{\mat{D}} 
 
\newcommand{\alphavec}{\boldsymbol{\alpha}} 
\newcommand{\p}{\vec{p}} % Search direction
\newcommand{\q}{\vec{q}} % Unconstrained search direction
\newcommand{\BPQN}{Bi-PQN\xspace}
\newcommand{\MPQN}{Mono-PQN\xspace}

\newcommand{\V}{\mat{V}}
\newcommand{\w}{\vec{w}}
\newcommand{\s}{\vec{s}}
\newcommand{\y}{\vec{y}}
\renewcommand{\H}{\mat{H}}
\newcommand{\B}{\mat{B}}

\newcommand{\U}{\mat{U}}
\newcommand{\prox}{\operatorname{prox}}

\DeclareMathOperator*{\argmin}{argmin}

\DeclareMathOperator{\dom}{dom}

\newcommand{\Np}{M} %notation for number of particles
\newcommand{\Nd}{N} %notation for total dof 
\newcommand{\Nc}{n} %notation for number of collisional paris
\newcommand{\defeq}{\stackrel{\text{\tiny def}}{=}} 

\DeclarePairedDelimiter\ceil{\lceil}{\rceil}

\graphicspath{{fig/}}

\begin{document}
\title{A Bifidelity Proximal Quasi-Newton Method for Dense Rigid Body Suspension Collision Resolution }
\author[1]{Nicholas Rummel\thanks{These authors contributed equally to this work.}}
\author[1]{Tyler Jensen$^*$}
\author[1]{Stephen Becker} 
\author[1]{Eduardo Corona} 
\affil[1]{Department of Applied Mathematics, University of Colorado Boulder}
\date{\today}
\maketitle
\begin{abstract}
    Direct numerical simulation of dense rigid body suspensions poses significant computational challenges. A popular approach to resolve collisions necessitates solving a linear complementary problem (LCP) per time step. Each matrix vector product (MVP) inside the LCP requires solving an expensive partial differential equation. In this work, we show the LCP can be solved efficiently, often in only three to four MVPs. Specifically, we develop a custom monofidelity proximal quasi-Newton (\MPQN) method and a bi-fidelity variant (\BPQN). Our approach is validated through an application to representative systems of dense Stokesian Janus particles. Notably, in contact resolution our \MPQN and \BPQN achieve $\approx 1.5 \times$ and $> 2 \times$ speed up respectively against a competitive baseline, with the latter method displaying robust, problem-size-independent convergence. For our largest simulation involving $216$ particles, our \BPQN cut total simulation runtime to five days, as compared to the eight days required by the prior state-of-the-art method.
\end{abstract}
\section{Introduction} \label{sec:background}

Dense Stokesian particulate flows model a wide array of complex materials of interest to fundamental science and its applications. In such materials, the interplay of short and long-range interparticle forces gives rise to emergent phenomena at larger scales; this leads to effective rheological properties such as jamming \citep{TrappePrasadCipellettiEtAl2001Nature}, phase transitions \citep{AndersonLekkerkerker2002Nature}, and shear thickening \citep{wagnerShearThickeningColloidal2009}, with applications to materials design, e.g., woven Kevlar \citep{LeeWetzelWagner2003JMaterSci,PrasetyaAndokoSuprayitnoEtAl2024JMEST}. In the context of the study of biological systems, recent work on active matter has revitalized interest in self-assembly and collective behavior of suspensions with self-propelled microswimmers, driven rotators, and crosslinked microtubules \citep{SaintillanShelley2015ComplexFluidsinBiologicalSystems, NeedlemanDogic2017NatRevMater}. Direct numerical simulations (DNS) of dense particulate suspensions are indispensable to the study of these materials; DNS serve as a critical, high-fidelity bridge between microscopic particle physics and macroscopic flow. They help overcome challenges in experimental observations, and this has motivated the recent development of fast, scalable and robust frameworks for large-scale DNS.

Various mathematical formulations have been proposed to capture interparticle forces, namely, Rotne–Prager–Yamakawa \citep{WajnrybMizerskiZukEtAl2013JFluidMech}, Stokesian dynamics \citep{BradyBossis1988AnnuRevFluidMech}, and multipole methods \citep{CichockiFelderhofHinsenEtAl1994JChemPhys, ClaeysBrady1993JFluidMech, SierouBrady2001JFluidMech}. Schemes formulating long-range forces obeying PDEs (Stokes, Laplace, Yukawa) include fictitious domain methods, immersed boundary methods, and boundary integral methods (BIMs) \citep{Maxey2017AnnuRevFluidMech}. Some models will also require incorporation of thermal fluctuations modeled by Brownian motion. Regardless of their formulation, large-scale high-fidelity DNS pose significant computational demands heightened by the wide range of spatial scales involved as well as the long time horizon inherent in slow relaxation of soft materials. 

This paper contributes substantial acceleration to address a computational bottleneck shared by an increasing number of candidate DNS frameworks for rigid body particle suspensions, namely, the cost of optimization-based collision resolution. Our key contributions are designed to be agnostic to which of the aforementioned mathematical formulations is employed. However, our implementation has been tailored to a boundary integral method (BIM). In the context of particulate Stokes flows, BIM has distinct advantages which arguably make it an ideal candidate, namely dimensionality reduction in the absence of body forces and well-conditioned linear systems \citep{KlintebergTornberg2016JournalofComputationalPhysics} arising from 2nd kind Fredholm equation formulations. All numerical experiments presented in this work use the BIM based Stokes mobility solver as discussed in \citep{CoronaVeerapaneni2018JournalofComputationalPhysics, YanCoronaMalhotraEtAl2020JournalofComputationalPhysics, KohlCoronaCheruvuEtAl2021arXiv210414068}.

\subsection{Computational bottleneck: Optimization-based Collision Resolution} \label{sec:background-collisions}
As we indicate above, regardless of what formulation of interparticle forces and numerical solvers are used, particularly if large-scale forces involve accurate solution of PDEs, incorporating collision resolution will result in a substantial increase to computational cost. As is argued in a number of recent works \citep{YanCoronaMalhotraEtAl2020JournalofComputationalPhysics, LuRahimianZorin2018arXiv181204719, LuRahimianZorin2017JournalofComputationalPhysics}, robust and practical DNS frameworks require addressing collisions regardless of whether lubrication theory would, in the limit, prevent them \citep{TophojMollerBrons2006PhysFluids, TophojMollerBrons2006PhysFluids,PalaniappanDaripa2002ZangewMathPhys}. Collision resolution approaches may be classified into two camps. Penalty methods prescribe collision potentials near particle boundaries, often equivalent to a sequence of progressively stiffer springs. This introduces considerable artificial stiffness, drastically limiting timestep size for explicit time-stepping schemes \citep{JanelaLefebvreMaury2005ESAIMProc}. This is especially problematic for dense suspension formulations where a PDE is solved at every timestep. Optimization-based methods provide an alternative that avoids artificial stiffness and can be effectively coupled with PDE-based formulations. By imposing non-overlapping constraints and solving the resulting linear complementary problem (LCP), collisions are resolved accurately. Solving the LCP, however, poses its own computational challenge: function, gradient and Hessian evaluations require expensive LCP matrix vector products (MVPs).

Conventional state-of-the-art LCP solvers include first order proximal gradient descent (PGD) methods, such as the accelerated Proximal Gradient Descent (A-PGD)\footnote{The reader may be familiar with this method under other titles such as FISTA or Nesterov PGD.} \citep{Nesterov1983ProcUSSRAcadSci, BeckTeboulle2009SIAMJImagingSci}, as well as active set methods \citep{Lemke1965ManagSci} and second order methods like interior point methods and minimum-map Newton \citep{Mangasarian1977JOptimTheoryAppl, Anitescu2006MathProgram, Niebe22015}. In the context of large-scale suspensions simulation, however, the observation was made that a heuristic variant of PGD that utilizes the Barzilai Borwein (BB-PGD) step size dominated other methods for moderate to large problems. In this context, BB-PGD owes its success to budgeting only \emph{one MVP per iteration}. In contrast, A-PGD may require more than one MVPs per iteration, and second-order methods require numerous MVPs to solve a linear system with a Krylov based solver at every iteration. From the vantage point that only hindsight can provide, it is clear that requiring many MVPs per iteration undermines the benefit provided by reducing the iteration count. This leads to the guiding principle of our work: 

\vspace{0.2 cm}
\centerline{\textbf{\emph{Fast LCP solvers should minimize total MVPs per iteration}.} }
\vspace{0.2 cm}
The methods presented in this work achieve the desired acceleration in two complementary ways. First, we develop a Proximal Quasi-Newton (\MPQN) method following the work of \citep{BeckerFadiliOchs2018arXiv180108691} with specializations particular to this application. Specifically, \MPQN first transforms the LCP into a constrained quadratic program, and then uses at most one gradient (hence one MVP) evaluation per iteration, while still incorporating curvature information. Second, we incorporate high and low \emph{fidelity} gradients in our Bifidelity Proximal Quasi-Newton (\BPQN) method. The low fidelity model is formed by coarsening the \emph{discretization} used to solve the so-called \emph{Stokes mobility problem} described in \Cref{sec:formulation}. The accuracy of evaluating the LCP matrix is controlled by the underlying discretization which solves the PDE. As described in \Cref{sec:multiFidelity}, careful selection of low and high fidelity MVPs and their incorporation into \BPQN further reduces the computational cost of the LCP solver.

For the LCP problem we study, well-tuned monofidelity algorithms converge quickly --- on the order of ten iterations --- because the second-kind Fredholm  formulations coupled with BIM provide LCPs that are exceptionally well conditioned \citep{KlintebergTornberg2016JournalofComputationalPhysics}. 
For this reason, we cannot use existing general purpose multifidelity optimization algorithms because these rely on sampling, which would immediately require more than ten MVPs.
These general methods also assume little to nothing about the structure of the lower fidelity models \citep{Fernandez-Godino2023ACSE, PeherstorferWillcoxGunzburger2018SIAMRev}. When designing our custom \BPQN algorithm we leveraged the  unique structure of the collision resolution problem. In particular, PDE-based particle simulations provide highly structured hierarchies of fidelity (e.g., hp-refinement), which our custom method will exploit. 

\subsection{Contributions}\label{sec:background-contri}
Our contributions can be summed up in two stages, corresponding to each of the two optimization solvers developed in this work: 

First, we present a custom implementation of a  \textbf{Proximal Quasi-Newton (\MPQN)} method, which consistently outperforms comparable first and second order methods. This method features careful integration of design choices specific to our problem. \MPQN uses quasi-Newton rank-$r$ updates taking advantage of a constant Hessian through a specialized dual problem, and importantly solving the dual problem does not necessitate additional expensive MVPs. Also, the optimal over-relaxation parameter is obtained through a closed form solution to a one dimensional constrained optimization problem. Finally, careful exploitation of the linear gradient allows for reuse of MVPs resulting in only one MVP per iteration.

Second, we develop a novel \textbf{Bifidelity Proximal Quasi-Newton (\BPQN)} algorithm. This method extends our \MPQN method to leverage readily available low-fidelity LCP matrices by incorporating them as initializations for the quasi-Newton Hessian approximation. Careful implementation of \BPQN minimizes the overall number of low and high-fidelity MVPs, leading to an improvement in computational efficiency.

We demonstrate that both of our methods result in a practical advantage. In simulations of Janus particles, our \MPQN and \BPQN achieve $\approx 1.5 \times$ and $> 2 \times$ speed up in contact resolution. Furthermore, our testing suggests the speed up achieved by \BPQN is problem-size-independent. For our largest simulation involving $216$ particles, our \BPQN cut total simulation runtime to $4.8$ days, as compared to the $7.8$ days required by the prior state-of-the-art method.

\subsection{Outline}\label{sec:background-outline}
\Cref{sec:formulation} introduces the mathematical formulation of the Stokes mobility problem. \Cref{sec:colRes} formulates how to resolve collisions via optimization and discusses alternative methods. \Cref{sec:pqn} describes our novel application and designed \MPQN method, and \cref{sec:multiFidelity} then extends the algorithm to the multifidelity case with \BPQN. \Cref{sec:res} demonstrates the viability of our approach with numerical experiments, and a discussion of conclusions and future work is in \cref{sec:disc}.

\section{Mathematical context: the Stokes mobility problem} \label{sec:formulation}

We start with a 
review of the broader mathematical context for our formulation of rigid body suspensions dynamics. When viscous forces dominate, it is reasonable to model the velocity of the surrounding fluid medium with the Stokes equations. A wide array of dynamical problems of interest can then be framed as solving a so-called \emph{Stokes mobility problem} at each time step; that is, given known total forces and torques applied to each particle, we must solve for the translational and rotational velocities. 

To this end, consider $\Np$ rigid particles suspended in a viscous Newtonian fluid that occupies a domain, taken to be $\mathbb{R}^3$ unless otherwise specified, exterior to the particles. Let $\{\vec{V}_j, \vec{\Gamma}_j, \c_j\}_{j=1}^{\Np}$ denote volume, boundary, and center of mass for each particle; the center of mass serves as a tracking point for translation. We denote the total force and torque applied to each particle as the pair $\{\vec{F}_{j},\vec{T}_{j}\}$, and the corresponding translational and rotational velocities as $\{\vec{U}_{j}, \vec{\Omega}_{j}\}$. We assume hydrodynamic forces and torques are a result of interparticle interactions communicated through the fluid medium, which can be integrated as moments of traction (surface force) on each particle's surface. Given fluid viscosity $\eta$, pressure $p_\text{fluid}$, and fluid velocity $\vec{u}$, one can compute the fluid stress  $\vec{\sigma} = -p_\text{fluid} \mat{I} + \eta \bigl[\nabla \vec{u} + (\nabla \vec{u})^\top\bigr]$. Assuming a near-zero Reynold's number regime (microscopic, viscous flow) and negligible body forces, it holds that $\nabla \cdot \vec{\sigma} = 0$ in the fluid domain. One obtains the Stokes equations from assuming the fluid is incompressible: 
\begin{subequations} \label{eq:stokes}
\begin{align}
        -\nabla p_\text{fluid}  + \eta \nabla^2 \vec{u}(\vec{z})  = 0 &, \; \forall \vec{z} \in \mathbb{R}^3 \setminus \cup_{j=1}^{\Np} \vec{V}_j\\
        \nabla \cdot \vec{u}(\vec{z})  = 0 &, \; \forall \vec{z} \in \mathbb{R}^3 \setminus \cup_{j=1}^{\Np} \vec{V}_j\\
        \vec{u}(\vec{z}) \rightarrow 0 &, \; \text{as } \vec{z} \rightarrow \infty .
\end{align}
\end{subequations}
Force-torque balance and a chosen model of slip (e.g., no-slip) impose boundary conditions on Eq.~\eqref{eq:stokes}:
\begin{subequations} \label{eq:stokes-bc}
\begin{align}
       \vec{u}(\vec{z}) =  \vec{U}_j(\vec{z}) +  \vec{\Omega}_j(\vec{z}) \times (\z - \c_j)&, \; \forall \vec{z} \in \vec{V}_j,\\
       \int_{\vec{\Gamma}_j} \vec{f} \, \mathrm{d}S =  \vec{F}_j,\\
       \int_{\vec{\Gamma}_j} (\z - \c_j) \times \vec{f} \, \mathrm{d}S =  \vec{T}_j
\end{align}
\end{subequations}
where $\vec{f} = \vec{\sigma}\cdot \vec{n}$ is the hydrodynamic traction (surface force) density on each particle and $\vec{n}$ is the outward facing surface normal. 

Given that the Stokes equations are linear, we note our modeling assumptions and Eqs.~\eqref{eq:stokes} and~\eqref{eq:stokes-bc} imply the map between input forces and torques and output translational and rotational velocities is itself linear. That is, if we define $\UU = \bigl(\cdots , U^x_j , U^y_j ,U^z_j , \Omega^x_j , \Omega^y_j ,\Omega^z_j,\cdots\bigr) \in \mathbb{R}^{6\Np}$, $\FF = \bigl(\cdots , F^x_j , F^y_j ,F^z_j , T^x_j , T^y_j ,T^z_j,\cdots\bigr)\in \mathbb{R}^{6\Np}$ vectors of rigid body velocities and forces and torques, respectively, there is a so-called \emph{Stokes mobility matrix} $\MM \in \mathbb{R}^{6\Np \times 6\Np}$ such that $\UU = \MM \FF$. This mobility matrix is generally dense and encodes interparticle interaction given our assumptions on the medium. Via an energy-dissipation argument, $\MM$ can be shown to be symmetric positive definite \citep{DurlofskyBradyBossis1987JFluidMech, KimKarrila1991}. Linearity also grants us, via the superposition principle, the ability to separate our problem into a sum of sub-problems corresponding to force-torque pairs coming from two distinct sources, namely, non-collisional and collisional; we will use subindices $nc$ and $c$ for associated quantities accordingly. We define $\vec{u}_{n c}$ as the solution of Eqs.~\eqref{eq:stokes} and \eqref{eq:stokes-bc} given non-collisional forces $\vec{F}_{j,n c}$ and torques $\vec{\Omega}_{j,n c}$; these typically stem from long-range interactions. $\vec{u}_{c}$ then solves Eqs.~\eqref{eq:stokes}~and~\eqref{eq:stokes-bc} given collisional forces $\vec{F}_{j,c}$ and torques $\vec{T}_{j,c}$, and via linearity, $\vec{u}=\vec{u}_{nc}+\vec{u}_{c}$. This allows us to solve the mobility problem in two stages: given our current particle configuration, we first compute corresponding non-collisional rigid-body velocities $\UU_{nc}$ via a Stokes PDE solve. By superposition, we have:  
\begin{equation}\label{eq:mobility}
    \UU = \MM (\FF_{n c} + \FF_c) = \UU_{nc} + \MM \FF_c;
\end{equation}
in the collision resolution stage, we must then find collisional forces and torques that ensure particles satisfy contact constraints, leading to the complementarity problem (CP). 
\begin{figure} [ht]
    \centering
    \begin{tikzpicture}[
        matrix_box/.style 2 args={
            draw, thick, fill=gray!5, 
            minimum height=#1, 
            minimum width=#2, 
            anchor=center
        },
        dim_line/.style={
            |-|, 
            thin, 
            gray, 
            shorten <=2pt, 
            shorten >=2pt
        },
        dim_text/.style={font=\small, text=black}
        ]
        %% M
        \node[matrix_box={1cm}{1cm}] (M) {};
        \node at (M) {$\MM$};
        % M Dimensions
        \draw[dim_line] ($(M.south west)-(0,0.3)$) -- ($(M.south east)-(0,0.3)$) node[midway, below, dim_text] {$6\Np$};
        \draw[dim_line] ($(M.south west)-(0.3,0)$) -- ($(M.north west)-(0.3,0)$) node[midway, left, dim_text] {$6\Np$}; 
        
        %% =
        \node[right=0.25cm of M, font=\Large] (eq) {$\approx$};
        
        %% P
        \node[matrix_box={1cm}{3cm}, right=1cm of eq] (P) {};
        \node at (P) {$\PP$};
        % P Dimensions
        \draw[dim_line] ($(P.south west)-(0,0.3)$) -- ($(P.south east)-(0,0.3)$) node[midway, below, dim_text] {$\Nd$};
        \draw[dim_line] ($(P.south west)-(0.3,0)$) -- ($(P.north west)-(0.3,0)$) node[midway, left, dim_text] {$6\Np$};
        
        %% M-hat 
        \node[matrix_box={3cm}{3cm}, right=1cm of P] (Mstk) {};
        \node at (Mstk) {$\MMstk$};
        % M-hat Dimensions
        \draw[dim_line] ($(Mstk.south west)-(0,0.3)$) -- ($(Mstk.south east)-(0,0.3)$) node[midway, below, dim_text] {$\Nd$};
        \draw[dim_line] ($(Mstk.south west)-(0.3,0)$) -- ($(Mstk.north west)-(0.3,0)$) node[midway, left, dim_text] {$\Nd$};
        
        %% PT
        \node[matrix_box={3cm}{1cm}, right=1cm of Mstk] (PT) {};
        \node at (PT) {$\PP^\top$};
        d
        % PT Dimensions
        \draw[dim_line] ($(PT.south west)-(0.3,0)$) -- ($(PT.north west)-(0.3,0)$) node[midway, left, dim_text] {$\Nd$};
        \draw[dim_line] ($(PT.south west)-(0,0.3)$) -- ($(PT.south east)-(0,0.3)$) node[midway, below, dim_text] {$6\Np$};
    \end{tikzpicture}
    \caption{The true mobility matrix \(\MM=
\PPop \MMstkop \PPop^*\) is approximated by the discretized operators \(\PP\MMstk\PP^\top\). It is prohibitively expensive to explicitly form \(\MMstk\), thus only MVPs are available.}
    \label{fig:mobilityMatrixDiag}
\end{figure}
We note in practice, $\MM$ is seldom explicitly formed. Given the Stokes PDE formulation in Eqs.~\eqref{eq:stokes} and \eqref{eq:stokes-bc}, it is useful to conceptualize it as a composition of three operators 
$$\MM=
\PPop \MMstkop \PPop^* \approx 
\PP  \MMstk \ \PP^\top.$$ 
The linear operator $\PPop^*$ maps forces and torques to fluid tractions (i.e., $\vec{f} = \PPop^*[\FF]$), and its adjoint $\PPop$ maps fluid velocities to rigid-body velocities (i.e., $\UU = \PPop[\vec{u}]$). Similarly, the linear operator $\MMstkop$ maps tractions to fluid velocities (i.e., $\vec{u} = \MMstkop[\vec{f}]$). A given discretization method for the Stokes mobility problem corresponds to an approximate mobility matrix vector product (MVP), as represented graphically in \cref{fig:mobilityMatrixDiag}. If we represent $(\vec{u,f})$ with $\Nd$ degrees of freedom, then $\MMstkop$ and $\PPop$ are approximated by the dense matrix $\MMstk \in \mathbb{R}^{\Nd \times \Nd}$ and the block diagonal matrix $\PP \in \mathbb{R}^{6 \Np \times \Nd}$ respectively. $\PP^\top$ is the approximation of $\PPop^*$. 

This explains why $\MM$ is seldom formed: even for moderate number of particles, computing its entries accurately requires solving $6\Np$ large $\Nd \times \Nd$ linear systems. It also provides useful general information concerning MVPs; for most practical implementations of the PDE solve, preconditioned Krylov subspace methods such as CG or GMRES will be likely required, especially given the dynamic nature of our computational domain. Perhaps most importantly, this decomposition of $\MM$ and its MVP will allow us to analyze computational costs, conditioning and fidelity as they tie into design decisions such as the Stokes mobility formulation, its discretization and number of degrees of freedom $\Nd$. In this work, BIMs follow the formulation as described in \citep{CoronaGreengardRachhEtAl2017JournalofComputationalPhysics,CoronaVeerapaneni2018JournalofComputationalPhysics,YanCoronaMalhotraEtAl2020JournalofComputationalPhysics}.

\section{Collision Resolution}\label{sec:colRes}

The temporal evolution of the system of suspended particles can be represented by the center of mass of each particle $\c_j$ and the orientation of each particle's reference frame $\vec{\theta}_j$\footnote{The orientation can be represented by a unit quaternion instead of Euler angles. This is beneficial as it can alleviate numerical issues coming from the well-known problem of  \emph{gimbal lock}. When quaternions are used, the equation for angular velocity is modified to $\dot{\vec{\theta}}_j = \Psi \vec{\Omega}_j$ where $\Psi$ performs transformation from Euler angles to quaternions.}:
\begin{equation}\label{eq:timeEvolution}
    \begin{bmatrix} \dot{\c}_j\\ \dot{\vec{\theta}}_j \end{bmatrix} = \begin{bmatrix} \vec{U}_j\\
     \vec{\Omega}_j\end{bmatrix}.
\end{equation}
One can form the configuration $\CC = (\cdots,c^x_j,c^y_j,c^z_j,\theta^x_j,\theta^y_j,\theta^z_j,\cdots) \in \mathbb{R}^{6 \Np}$ to compactly represent all the particles. Its dynamics are governed by an overdamped equation of motion
\begin{equation}\label{eq:dynamSys}
    \dot{\CC}= \GG \UU_{n c}+\GG \MM \FF_c
\end{equation}
where  $\GG \in \mathbb{R}^{6\Np \times 6 \Np}$ 
is a block structured matrix with each block containing identity matrices that map the velocities $\U_j$ and $\vec{\Omega}_j$ to $\c_j$ and $\vec{\theta}_j$. 

We then wish to impose non-overlapping constraints to address collisions. Given that our particles are convex, collisions between any two particles happen at a single contact point; while there are ${\Np \choose 2}$ potential pairs, due to packing limits, far fewer (on the order of $\mathcal{O}(\Np)$) collisions are possible. 

Let $\vec{\Phi}: \mathbb{R}^{6 \Np} \rightarrow \mathbb{R}^{{\Np \choose 2}}$ map particle configuration to minimum separation distance between particle pairs, where $\Phi_\ell(\cdot)$ denotes the distance for the $\ell^{\mathrm{th}}$ pair. For practical purposes, we consider only a subset $\AA$ of potential collisions, excluding pairs for which collisions within the next timestep are impossible. That is, we define 
\begin{equation} \label{eq:collisional-set}
    \AA := \left\{ \ell \in \left\{1,\cdots,{\Np \choose 2}\right\} \middle\vert \Phi_\ell(\CC) \leq \delta_t \right\} \ \ , \ \ \Nc = \lvert \AA \rvert\
\end{equation}
where the collisional threshold $\delta_t$ depends on current particle velocities and timestep size $\Delta t$. We then introduce the vector $\x \in \mathbb{R}^{\Nc}$ of normal contact force magnitudes applied at corresponding contact points for pairs in $\AA$. Each non-penetration contact constraint can be enforced by requiring $x_{\ell}$ and $\Phi_{\ell}$ to be non-negative, and their product to be zero. This is denoted $0 \leq x_{\ell} \perp \Phi_{\ell} \geq 0$. The rigid body dynamics from our mobility problem along with elementwise complementarity constrains define the differential variational inequality (DVI) in the variables $\CC$ and $\x$:  
\begin{equation} \label{eq:DVI}
    \begin{gathered}
        \dot{\CC} = \GG \UU_{n c}+\GG \MM \FF_c \\
        \vec{0} \leq \vec{\Phi}_{\AA}(\CC) \perp \x \geq \vec{0}.
    \end{gathered}
\end{equation}

We then define $\DD \in \mathbb{R}^{6\Np \times \Nc}$ such that $\FF_c = \DD \x$; it is a sparse matrix, as each one of its columns $\vec{\D_\ell}$ registers total forces and torques resulting from the $\ell$th contact and has at most $12$ non-zero entries ($6$ for spheres). By specifying a time stepping rule, the configuration $\CC$ can be evolved through time. We use a forward Euler time stepping scheme, but others are possible:
\begin{equation} \label{eq:dvi}
    \begin{gathered}
        \CC^{(i+1)}=\CC^{(i)}+\Delta t \left(\GG \UU_{n c} +\GG \MM \DD \x \right) \\
        \vec{0} \leq \vec{\Phi_{\AA}}\left(\CC^{(i+1)}\right) \perp \x \geq \vec{0}.
    \end{gathered}
\end{equation}
Given the non-linearity of $\vec{\Phi}_{\AA}$, the complementarity condition in \Cref{eq:dvi} requires the solution of a \emph{nonlinear} complementarity problem (NCP)
\begin{equation} \label{eq:NCP}
    \vec{0} \leq \vec{\Phi_{\AA}}\left(\CC^{(i)}+\Delta t \left(\GG \UU_{n c} +\GG \MM \DD \x \right)\right) \perp  \x \geq \vec{0}.
\end{equation}
This nonlinearity poses both numerical and analytical challenges. Provided certain assumptions hold, one can linearize about the current configuration $\CC^{(i)}$ \citep{Anitescu2006MathProgram}, and instead solve the following \emph{linear} complementarity problem (LCP)
\begin{equation*} 
    \vec{0} \leq \vec{\Phi_{\AA}}\left(\CC^{(i)}\right) + \Delta t \nabla_{\CC} \vec{\Phi_{\AA}}(\CC^{(i)}) \left(\GG \UU_{n c} +\GG \MM \DD \x\right) \perp \x \geq \vec{0} . 
\end{equation*}
For rigid particles, it can be generally shown that  $\DD^\top=\left(\nabla_{\CC} \vec{\Phi}\right) \GG$ \citep{Anitescu2006MathProgram}, yielding the LCP
\begin{equation} \label{eq:LCP}
    \vec{0} \leq \A [\x] + \b \perp \x \geq \vec{0} 
\end{equation}
where 
\begin{equation} \label{eq:defAandb}
    \A = \DD^\top \MM \DD \in \mathbb{R}^{{\Nc} \times {\Nc}} \text{ and }
    \b = \tfrac{1}{\Delta t}\bigl(\vec{\Phi}\left(\CC^{(i)}\right) + \Delta t \DD^\top \UU_{n c}\bigr) \in \mathbb{R}^{\Nc}.
\end{equation}
Notice the LCP matrix $\A$ is symmetric positive semi-definite, making the LCP symmetric. 
In practice, the matrix $\A$ is in fact positive definite (and well-conditioned).

Since $\A$ is defined in terms of $\MM$, due to computational constraints, it is rarely formed as a full matrix. Applying $\A[\cdot]$ requires evaluating $\MM[\cdot]$ which already utilizes an iterative solver for a large linear system. Any methods which require solving a linear system at each iteration of the optimization method, i.e., applying $\A^{-1}[\cdot]$, would then involve nested linear iterative solvers and thus would be unlikely to stay within a \emph{total} budget of roughly a dozen MVPs. It is for this reason that we maintain that methods requiring one application of $\A[\cdot]$ per iteration are most appropriate. 

\subsection{The LCP as an Optimization Problem} \label{sec:colRes-opt}
A common reformulation of the LCP relies on its equivalence to the constrained quadratic program (CQP) with non-negativity constraints
\begin{equation} \label{eq:qp}
    \underset{{\x \geq \vec{0} }}{\operatorname{min}}\, \tfrac{1}{2} \x^\top \A[\x] + \x^\top\b. 
\end{equation}
This can be seen by deriving the KKT conditions and matching them with the above statements for the LCP. Second order optimization methods exist to solve the CQP \eqref{eq:qp}, but they effectively require linear solves for $\A$ or for a matrix composed from submatrices of $\A$. Hence, we restrict our attention to first order methods requiring only gradient information and, ideally, only  one MVP per iteration. 

\subsection{Optimization via a Local Model and Derivation of PGD} \label{sec:colRes-quadMdl}

Most optimization methods proceed by iteratively defining tractable local models; the presence of constraints can impose complications. In order to handle constraints, we consider an approach that splits the objective into two parts. For the moment, consider a generic optimization problem 
$$\min_{\x \in \mathbb{R}^\Nc} \; f(\x) + g(\x)$$
where the term $f$ is smooth and the term $g$ (which may encode a constraint) is not necessarily smooth. This can be solved by a \emph{forward-backward splitting} method where $f$ will be handled explicitly and $g$ in an implicit manner. Here we present one such method, and for reasons that will be clear shortly, it is referred to as \emph{proximal gradient descent} (PGD). For some $\tau^{(k)}>0$, define a local approximation $f^{(k)}$ of $f$ about the incumbent point $\xk$ as
$$f^{(k)}(\x) = f(\xk) + \nabla f(\xk)^\top (\x-\xk) + \frac{1}{2}\big\|\x-\xk\big\|_{\B_{\mathrm{PGD}}^{(k)}}^2 $$
where $\B_{\mathrm{PGD}}^{(k)} \defeq \tfrac{1}{\tau^{(k)}} \mat{I}$ and $\|\x\|_\B^2 \defeq \x^\top \B \x$ for any $\B\succ 0$. Then PGD iterates 
\begin{align*}
    \xk[k+1] &= \argmin_{\x \in \mathbb{R}^\Nc} \underbrace{f^{(k)}(\x) + g(\x)}_{m^{(k)}_{\mathrm{PGD}}(\x)} \\
    &= \argmin_{\x \in \mathbb{R}^\Nc} \frac{1}{2}\|\tilde{\x}^{(k)}- \x \|^2 + \tau^{(k)} g(\x) \quad\text{where}\quad \tilde{\x}^{(k)} = \xk - \tau^{(k)}  \nabla f(\xk).
\end{align*}
The second line follows by completing the square and multiplying by $\tau^{(k)}$.  Defining the \emph{proximal operator} of $g$ to be
\begin{equation}
    \label{eq:prox-pgd}
    \prox_{\tau g}(\y) \defeq  \argmin_{\x}\; \frac12\|\y-\x\|^2 + \tau g(\x),
\end{equation}
which is unique and well-defined whenever $g$ is convex, proper and lower semi-continuous, we can then write the generic form of proximal gradient descent as the iteration: 
\begin{algorithmic}[1]
    \State choose a step size $\tau^{(k)}$
    \State $\tilde{\x}^{(k)} = \xk - \tau^{(k)}  \nabla f(\xk)$
    \State $\xk[k+1] = \prox_{\tau^{(k)} g}( \tilde{\x}^{(k)} )$
\end{algorithmic}

\subsubsection{PGD for the CQP}\label{subsec:CQP}
Our constrained quadratic program in Eq.~\eqref{eq:qp} can be split as follows:
\begin{equation} \label{eq:fwdBwdSplt}
    \min_{\x \in \mathbb{R}^\Nc} \; \underbrace{\frac{1}{2} \x^\top \A[\x] + \x^\top\b}_f + \underbrace{\iota_{C}(\x)}_g, \quad \iota_{C}(\x) \defeq \begin{cases} 0, & \x\in C\\ \infty, & \x \not\in C\end{cases}
\end{equation}
where $\iota_{C}(\cdot)$ is the indicator function for the non-negative orthant.
For this choice of $f$, we have $\nabla f(\x) = \A[\x] + \b$, 
and the proximal operator for $\iota_{C}$ is the projection onto the non-negative orthant. 
This projection is easy to compute since it is done elementwise, taking $\xi \mapsto (\xi)_+ \defeq \max\{\xi,0\}$, which is a result of approximating the Hessian by a \emph{diagonal} matrix $\B_{\mathrm{PGD}}^{(k)} = \tfrac{1}{\tau^{(k)}} \mat{I}$.

\subsubsection{Step Size Selection}\label{sec:colRes-quadMdl-stepSize}
Forward-backward splitting methods begin by selecting an unconstrained search direction $\q^{(k)}$ and a corresponding step size $\tau^{(k)}$. For PGD, if $\nabla f$ is $L$-Lipschitz continuous, then it is well known that the algorithm converges if $\tau^{(k)} = \tau \in (0,\frac{2}{L})$, and there are also known results allowing for $\B^{(k)}$ to be a generic positive definite matrix (with appropriate bounds in the Loewner partial order sense)~\citep{Vu2012arXiv12102986}. 
In fact, much like the successive over-relaxation (SOR) technique for solving linear systems, forward-backward splitting methods also allow for an over-relaxation parameter $\eta$, as follows:
\begin{algorithmic}[1]
    \State choose an unconstrained step direction $\q^{(k)}$ 
    \State choose a step size $\tau^{(k)}$ 
    \State $\tilde{\x}^{(k)} = \xk - \tau^{(k)} \q^{(k)}$ 
    \State $\p^{(k)} \gets \prox_{\tau^{(k)} \iota_C}( \tilde{\x}^{(k)} ) - \xk$ \Comment{Define the over-relaxation search direction}
    \State choose over-relaxation parameter $\eta^{(k)}$
    \State $\xk[k+1] = \xk + \eta^{(k)} \p^{(k)}$
\end{algorithmic}
For certain choices, e.g., $\tau^{(k)} = \tau \in (0,\frac{1}{L})$ and $\eta^{(k)}=\eta \in (0,1]$, the proximal quasi-Newton method provably converges~\citep[Cor.~28.9]{BauschkeCombettes2017}. However, in practice the Lipschitz constant $L$ may not be known, or may be too conservative, and hence line search methods are used. Line search techniques can also be applied to the over-relaxation parameter 
$\eta^{(k)}$, which will be discussed in more detail in \cref{sec:pqn-stepSize}.
A visualization of one iteration of a forward-backward splitting method for the case $g=\iota_C$ is detailed in \cref{fig:stepSize}.

\begin{figure}[ht] 
    \centering
   
    \begin{tikzpicture}[>=stealth, scale=1.5]
        \fill[gray!15] (0,0) rectangle (7,2);
        \draw[<->, thick] (-1,0) -- (7,0) ; % x axis 
        \draw[<->, thick] (0,-1.5) -- (0,2) ; % y axis
        \node[below left, text=black!70] at (7,2) {$C=\left\{\x \geq 0 \mid \x \in \mathbb{R}^{\Nc}\right\}$};
    
        % 2. Define the coordinates for the points
        \coordinate (xk) at (1, 1);
        \coordinate (xtilde) at (5, -1);
        \coordinate (xhat) at (4, 0); 
        \coordinate (xnext) at (2.5, .5); 
    
        % unconstrained step
        \draw[->, RoyalBlue!80, thick] (xk) -- (xtilde);
        % proximal operator to obtain a feasible point
        \draw[->, BrickRed!80, dashed, thick] (xtilde) -- (xhat);
        % Over relaxation search direction 
        \draw[-, black!50, thick, dashed] (xk) -- (xhat);
        % Next iterate
        \draw[->, Purple!80!black, very thick] (xk) -- (xnext) ;
    
        % Draw the points and their formula labels
        \fill[black] (xk) circle (1.5pt) node[left] {$\xk$};
        \fill[RoyalBlue] (xtilde) circle (1.5pt) node[right] {$\tilde{\x}^{(k)}=\xk+\tau^{(k)}\vec{q}^{(k)}$};
        \fill[BrickRed] (xhat) circle (1.5pt) node[above right] {$\hat{\x}^{(k)}=\text{prox}_{l_{C}}^{B^{(k)}}(\tilde{\x}^{(k)})$};
        \fill[Purple!80!black] (xnext) circle (1.5pt) node[above right, xshift=-2pt, yshift=-2pt] {$\xk[k+1]=\xk+\eta^{(k)}\vec{p}^{(k)}$};
    
    \end{tikzpicture}
    \caption{An iteration of a forward-backward splitting algorithm: First, the unconstrained step $\tilde{\x}^{(k)} = \xk + \tau^{(k)} \q^{(k)}$ is taken where $\q^{(k)}$ is the unconstrained search direction shown in blue. Then, a feasible point is selected by applying the proximal operator $\hat{\x}^{(k+1)} = \prox_{\iota_C}^{\B^{(k)}}\bigl(\tilde{\x}^{(k)}\bigr)$ shown in red. Notice, that because the approximation to the Hessian is \emph{not necessarily} a diagonal matrix, the proximal operator may not apply the orthogonal projection onto the feasible set. Finally, an over-relaxation step is taken along the search direction $\p^{(k)}$ with parameter $\eta^{(k)}$ to obtain the next iterate $\xk[k+1]$ shown in purple.} \label{fig:stepSize}
\end{figure}
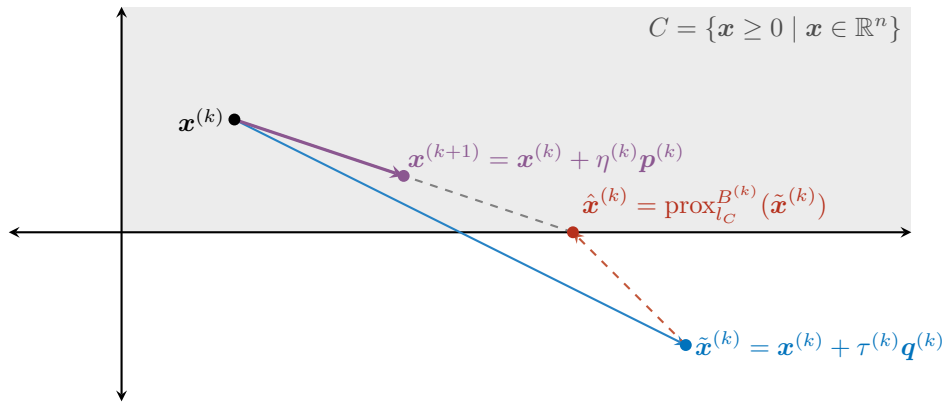

\subsection{Motivation for \MPQN} \label{sce:colRes-pqn}

Currently, variants of PGD (such as the Nesterov accelerated variants, cf.~\citep[Ch.~10.7]{Beck2017}) are the tool of choice when solving LCPs of moderate to large size in the context of collision resolution \citep{YanCoronaMalhotraEtAl2020JournalofComputationalPhysics}. PGD is a general tool and can be readily applied to Equation \eqref{eq:qp}, but it does not take advantage of the obvious structure of the optimization problem, namely a \emph{constant nondiagonal Hessian}. In the case of solving a QP without constraints, gradient descent converges linearly with a rate of $\tfrac{\kappa(\A) - 1}{\kappa(\A) + 1}=\mathcal{O}(1-\kappa(\A)^{-1})$ where $\kappa(\A)$ is the condition number of the matrix $\A$. This can be greatly improved upon by the use of quasi-Newton methods. Quasi-Newton methods approximate the Hessian using only information from gradient evaluations and the iterates themselves. This is done by creating low rank updates to an initial approximation that satisfy secant conditions. In the unconstrained case, quasi-Newton methods obtain a linear convergence rate of $\tfrac{\sqrt{\kappa(\A)} - 1}{\sqrt{\kappa(\A)} +1}$ when applied to QPs \citep{Atkinson1989}, as well as local superlinear convergence under various assumptions~\citep[Ch.\ 6]{NocedalWright2006}. Nesterov accelerated gradient descent can match the $\mathcal{O}(1-\kappa(\A)^{-1/2})$ rate but does not converge superlinearly.

In fact, on QPs, certain quasi-Newton methods with exact line searches reduce to the conjugate gradient (CG) method~\cite[Thm.\ 6.4]{NocedalWright2006}. The CG method is the optimal method among all gradient-based methods that produce estimates in the span of the initial point and subsequent gradients. The good performance of CG comes from choosing search directions that are conjugate to each other, so that the optimization is done in such a way such that the previous iterations' progress is not lost and there is no \textit{zig-zagging}. At each step the iterate is a global minimizer over the search directions explored thus far. 

Unfortunately, extending the good performance of quasi-Newton methods to \emph{constrained} optimization problems is difficult.
If $\xk[k+1] \gets \xk + \q_\text{Q-N}^{(k)} $ is the unconstrained quasi-Newton update, then $\xk[k+1]$ may be infeasible.  Defining 
$\xk[k+1] \gets \bigl( \xk + \q_\text{Q-N}^{(k)}\bigr)_+$ keeps $\xk[k+1]$ feasible, but drastically slows down convergence because of a subtle mismatch between metrics: taking intuition from CG, the conjugate directions do not align with the projection metric.  This can be fixed but requires care, especially in order to keep the iterations tractable. The faster convergence of quasi-Newton methods in the unconstrained case motivated the development of our \MPQN to solve \cref{eq:qp}.

\section{A Proximal Quasi-Newton Method} \label{sec:pqn}
This section describes our monofidelity proximal quasi-Newton (\MPQN) method and its specialization to solving the constrained quadratic program. The \MPQN method extends a forward-backward splitting framework similar to that discussed in \Cref{sec:colRes-quadMdl}. \MPQN optimizes a local quadratic model at each iteration $m_{\mathrm{PQN}}$ 
\begin{equation} \label{eq:pqnQuadMdl}
    \hat{\x}^{(k+1)} 
    = \underset{\x \in \mathbb{R}^\Nc}{\argmin} \underbrace{\nabla f(\xk)^\top (\x-\xk) + \tfrac{1}{2}\bigl( \x-\xk \bigr)^\top\B^{(k)}\bigl(\x-\xk \bigr) +\iota_C(\x)}_{m^{(k)}_{\mathrm{PQN}}(\x)}
\end{equation}
where $\B^{(k)}$ is a current approximation of the Hessian with a corresponding inverse $\bigl(\B^{(k)}\bigr)^{-1} = \H^{(k)}$. In contrast to PGD, here $\B^{(k)} = \B_0 + \U\U^\top - \V\V^\top$ where $\B_0 \in \mathbb{R}^{\Nc\times \Nc}$ and $\U, \V\in \mathbb{R}^{\Nc\times r}$ for $r\leq \Nc$. 
By completing the square, we can write the update as the solution to a \emph{weighted} proximal operator 
\begin{equation} \label{eq:prox-pqn}
 \hat{\x}^{(k+1)} 
    = \operatorname{prox}^{\B^{(k)}}_{\iota_C} (\tilde{\x}^{(k)}) \defeq \underset{\x \geq \vec{0} }{\operatorname{argmin}} \; \frac{1}{2} \|\tilde{\x}^{(k)}- \x \|_{\B^{(k)}}^2 
\end{equation}
where $\tilde{\x}^{(k)} = \xk + \H^{(k)}\bigl( \A [\xk] + \b\bigr)$.

Unlike Equation \eqref{eq:prox-pgd}, $\B^{(k)}$ is not diagonal, so its corresponding proximal operator is not separable and there is no closed form for $\operatorname{prox}^{\B^{(k)}}_{\iota_C} (\cdot)$. Instead, applying this weighted proximal operator requires solving a subproblem at each iteration. There are many approaches to address this. Schmidt et al.\ suggested solving the subproblem with a variant of PGD \citep{SchmidtBergFriedlanderEtAl2009ProcTwelfthIntConfArtifIntellStat}, and Kim et al.\ suggested leveraging a line-search on the original cost function \eqref{eq:qp} to guarantee the Armijo-Goldstein condition and using binding sets to modify $\B^{(k)}$ in such a way that stepping into the infeasible region happens infrequently \citep{KimSraDhillon2010SIAMJSciComput}. L-BFGS-B solves Equation \eqref{eq:prox-pqn} with an active set method \citep{ByrdLuNocedalEtAl1995SIAMJSciComput}.

We instead opt to pursue an approach that exploits the diagonal plus low-rank structure of $\B^{(k)}$, initially developed by \cite{BeckerFadili2012AdvNeuralInfProcessSyst} for the rank $r=1$ case, and later extend to general rank in \citep{BeckerFadiliOchs2018arXiv180108691}. These works were motivated by the update arising from a specific zero memory quasi-Newton method with the symmetric rank 1 (SR1) update $\B^{(k)} = \tfrac{1}{\tau^{(k)}} \mat{I} +\sigma \u\u^\top$ where  $\sigma \in \{-1,1\}$ \citep{BeckerFadiliOchs2018arXiv180108691}. In this case, a closed form solution to the weighted proximal operator exists. By introducing an auxiliary variable $\alpha = \u^\top \tilde{\x}^{(k)}$, there is a natural dual problem which leads to solving a one-dimensional root finding problem. This can be solved in $\mathcal{O}(\Nc\log(\Nc))$ time by locating the interval where the root occurs and then solving for the root in closed form within that interval. 

For our LCP, the zero memory variant ($r=1$) provided an unsubstantial speedup compared to PGD, possibly because it is not retaining enough curvature information. The work \citep{BeckerFadiliOchs2018arXiv180108691} theoretically studied the rank $r>1$ case but did not incorporate it into a quasi-Newton algorithm because of the increased cost of the sub-problem. Specifically, the weighted proximal operator can be found using the following proposition.
\begin{proposition}\label{prop-nnProx}
Given an real $\Nc\times \Nc$ positive definite matrix $\B = \D + \U\U^\top -\V\V^\top$ where $\D \succ 0$ is diagonal, $\U \in \mathbb{R}^{\Nc\times r}$,  $\V \in \mathbb{R}^{\Nc\times r}$ then 
\begin{equation} \label{eq:prop1-prox}
   \prox^\B_{\iota_{C}}(\tilde{\x}) = \bigl(\tilde{\x} + \C^{-1}\V\tilde{\alphavec} - \D^{-1}\U\alphavec\bigr)_+
\end{equation}
where $\begin{bmatrix} \alphavec^* & \tilde{\alphavec}^* \end{bmatrix}^\top \in \mathbb{R}^{2r}$ is the unique root of monotonic piecewise linear function
\begin{equation}\label{eq:prop1-rootFinding}
\mathcal{L}\left(\alphavec , \tilde{\alphavec}\right) = \begin{bmatrix}
        \U^\top \Bigl[\tilde{\x} + \C^{-1}\V\tilde{\alphavec} - \bigl(\tilde{\x} + \C^{-1}\V\tilde{\alphavec} - \D^{-1}\U\alphavec\bigr)_+\Bigr] \\
        \V^\top \Bigl[\tilde{\x} - \bigl(\tilde{\x} + \C^{-1}\V\tilde{\alphavec} - \D^{-1}\U\alphavec\bigr)_+\Bigr]
    \end{bmatrix}
\end{equation}
where $\C = \D + \U\U^\top$.
\end{proposition}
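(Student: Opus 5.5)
Write $\x^\star=\prox^\B_{\iota_C}(\tilde{\x})$. It is the unique minimizer of the strongly convex problem $\min_{\x\ge \vec{0}} \tfrac12\|\x-\tilde{\x}\|_\B^2$, since $\B\succ 0$. The plan is to peel off the two low-rank pieces of $\B$ one at a time, introducing a dual variable for each, so that what remains is a prox in the diagonal metric $\D$. That prox is separable and hence an elementwise positive part. We then recover the dual variables from optimality conditions, which turn into the root-finding system \eqref{eq:prop1-rootFinding}. This is the rank-$r$ analogue of the argument of \citep{BeckerFadiliOchs2018arXiv180108691}, and we follow their route.

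\textbf{Step 1 (the $-\V\V^\top$ part).} Write $\B=\C-\V\V^\top$ with $\C=\D+\U\U^\top\succ 0$. The optimality condition for $\x^\star$ is
\[
\vec{0}\in \C(\x^\star-\tilde{\x})-\V\V^\top(\x^\star-\tilde{\x})+N_C(\x^\star).
\]
Set $\tilde{\alphavec}=\V^\top(\tilde{\x}-\x^\star)$; the sign is chosen to match the second block of $\mathcal{L}$. The condition becomes $\vec{0}\in\C(\x^\star-\tilde{\x}-\C^{-1}\V\tilde{\alphavec})+N_C(\x^\star)$, which says exactly that
\[
\x^\star=\prox^{\C}_{\iota_C}\bigl(\tilde{\x}+\C^{-1}\V\tilde{\alphavec}\bigr).
\]
Consistency then requires $\V^\top\bigl[\tilde{\x}-\x^\star\bigr]=\tilde{\alphavec}$. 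Reading the second block of \eqref{eq:prop1-rootFinding} as this fixed-point relation (the $-\tilde{\alphavec}$ being absorbed into how the residual is written) gives the second block.

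\textbf{Step 2 (the $+\U\U^\top$ part).} Apply the same trick to $\prox^{\C}_{\iota_C}(\y)$ with $\y=\tilde{\x}+\C^{-1}\V\tilde{\alphavec}$. Its optimality condition is $\vec{0}\in\D(\x-\y)+\U\U^\top(\x-\y)+N_C(\x)$. Set $\alphavec=\U^\top(\y-\x)$. Then $\x=\prox^{\D}_{\iota_C}(\y-\D^{-1}\U\alphavec)$. Because $\D$ is diagonal and positive, the weighted projection onto the orthant is separable and equals $(\cdot)_+$. This yields formula \eqref{eq:prop1-prox}. The consistency condition $\U^\top[\y-(\y-\D^{-1}\U\alphavec)_+]=\alphavec$ is the first block of $\mathcal{L}$.

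\textbf{Step 3 (monotonicity and uniqueness of the root).} This is where I expect the real work. Piecewise linearity is immediate, since $\mathcal{L}$ is built from $(\cdot)_+$ composed with affine maps. For existence, the root is supplied by the true minimizer $\x^\star$ through the definitions of $\tilde{\alphavec}$ and $\alphavec$ above. For uniqueness, I would show the maps $\alphavec\mapsto$ (first block) and $\tilde{\alphavec}\mapsto$ (second block) are monotone by identifying them as gradients of convex dual functions. The first block should be the gradient of the Fenchel dual
\[
\tfrac12\alphavec^\top\U^\top\D^{-1}\U\alphavec+\tfrac12\alphavec^\top\alphavec-\alphavec^\top\U^\top\y+\text{(Moreau envelope term)},
\]
using $\nabla$ of the Moreau envelope of $\iota_C$ in the $\D$-metric. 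This function is strongly convex because of the $\tfrac12\|\alphavec\|^2$ term, so its gradient is strongly monotone and has a unique zero.

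The $\tilde{\alphavec}$ block comes from a concave–convex saddle, because $-\V\V^\top$ enters with a minus sign. Here I would use $\B\succ 0$, which gives $\mathbf{I}-\V^\top\C^{-1}\V\succ 0$, to show that the reduced function in $\tilde{\alphavec}$ is strictly monotone. Equivalently, any two roots produce two minimizers of the same strongly convex problem. Those minimizers must coincide, and then $\alphavec$ and $\tilde{\alphavec}$ coincide through their defining equations, because $\U$ and $\V$ enter with $\mathbf{I}+\U^\top\D^{-1}\U\succ0$ and $\mathbf{I}-\V^\top\C^{-1}\V\succ0$ respectively. This last uniqueness argument is the route I would actually rely on, since it avoids fussing over the exact sign convention of $\mathcal{L}$.
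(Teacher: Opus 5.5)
Both substitutions in Steps~1 and~2 carry a sign error, and it is not cosmetic. With your choice $\tilde{\alphavec}=\V^\top(\tilde{\x}-\x^\star)$ one has $-\V\V^\top(\x^\star-\tilde{\x})=+\V\tilde{\alphavec}$. The inclusion is therefore $\vec{0}\in\C(\x^\star-\tilde{\x}+\C^{-1}\V\tilde{\alphavec})+N_C(\x^\star)$, i.e.\ $\x^\star=\prox^{\C}_{\iota_C}(\tilde{\x}-\C^{-1}\V\tilde{\alphavec})$, not the prox at $\tilde{\x}+\C^{-1}\V\tilde{\alphavec}$. Likewise $\alphavec=\U^\top(\y-\x)$ gives $\x=\prox^{\D}_{\iota_C}(\y+\D^{-1}\U\alphavec)$. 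The relations you actually wrote (the formula with $+\C^{-1}\V\tilde{\alphavec}-\D^{-1}\U\alphavec$, plus $\tilde{\alphavec}=\V^\top(\tilde{\x}-\x)$ and $\alphavec=\U^\top(\y-\x)$) characterize stationary points in metrics whose low-rank signs are flipped: $\C+\V\V^\top$ in Step~1 and $\D-\U\U^\top$ in Step~2. The latter need not be positive definite.

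A concrete failure: take $\Nc=r=1$, $\V=0$, $\D=d$, $\U=u$ with $u^2>d$. The true prox is $\max(\tilde{x},0)$. Your system $x=(\tilde{x}-u\alpha/d)_+$, $\alpha=u(\tilde{x}-x)$ has no solution when $\tilde{x}<0$. When $\tilde{x}>0$ it has, besides $\alpha=0$, the spurious solution $\alpha=u\tilde{x}$, $x=0$. So the ``every root yields a minimizer'' argument of Step~3 fails as written.

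Step~3 is also inconsistent with Steps~1--2. The strongly convex dual you describe has a $+\tfrac12\|\alphavec\|^2$ term, so its gradient contains $+\alphavec$. Your residual $\U^\top[\y-(\y-\D^{-1}\U\alphavec)_+]-\alphavec$ instead has Jacobian $\U^\top\boldsymbol{\Lambda}\D^{-1}\U-\mat{I}$. This is $-\mat{I}$ where $\boldsymbol{\Lambda}=0$ but can be positive definite where $\boldsymbol{\Lambda}=\mat{I}$.

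The repair is to take $\tilde{\alphavec}=\V^\top(\x^\star-\tilde{\x})$ and $\alphavec=\U^\top(\x-\y)$. Then every step is an equivalence and $\x^\star=(\tilde{\x}+\C^{-1}\V\tilde{\alphavec}-\D^{-1}\U\alphavec)_+$. The consistency conditions say that the two displayed blocks of $\mathcal{L}$ \emph{plus} $\alphavec$ and $\tilde{\alphavec}$, respectively, vanish. That added $+[\alphavec;\tilde{\alphavec}]$ is exactly the $+\mat{I}$ in the paper's Jacobian $J_{\mathcal{L}}$ in the appendix, and it is the form used in \citep{BeckerFadiliOchs2018arXiv180108691}. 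The displayed statement omits it, and your ``absorption'' reinserted it with the wrong sign.

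With these signs your route is correct and genuinely different from the paper's. The paper only invokes Corollary~3.6 of \citep{BeckerFadiliOchs2018arXiv180108691}: Fenchel duality for $\D+\U\U^\top$, Toland duality for $-\V\V^\top$, and differentiability of the Moreau envelope. Reversing your chain, any root yields $\x$ with $\vec{0}\in\B(\x-\tilde{\x})+N_C(\x)$, hence $\x=\x^\star$ since $\B\succ0$. Then $\tilde{\alphavec}=\V^\top(\x^\star-\tilde{\x})$ and $\alphavec=\U^\top(\x^\star-\tilde{\x}-\C^{-1}\V\tilde{\alphavec})$ are forced. So existence and uniqueness follow without duality, and without even using $\mat{I}-\V^\top\C^{-1}\V\succ0$.

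What the duality view adds is monotonicity, but only of two maps: the inner map in $\alphavec$, and the reduced map in $\tilde{\alphavec}$ with $\alphavec$ eliminated (this is where $\mat{I}-\V^\top\C^{-1}\V\succ0$ enters). Checking the blocks separately will not make the joint map on $\R^{2r}$ monotone, and in general it is not. For $\Nc=r=1$, on the region where the argument of $(\cdot)_+$ is positive, its Jacobian is $\bigl[\begin{smallmatrix} c/d & 0\\ uv/d & 1-v^2/c\end{smallmatrix}\bigr]$ with $c=d+u^2$. The symmetric part of this matrix is indefinite whenever $4d(c-v^2)<u^2v^2$.
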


This is a direct application of the general convex analysis result given in Corollary 3.6 of \citep{BeckerFadiliOchs2018arXiv180108691}. More details of this result applied to the non-negative QP are provided in \cref{app:solveProx-deriv}. 

The theory directly applies to solving Equation \eqref{eq:prox-pqn}, but to our knowledge \MPQN is the first implementation of the method beyond the zero memory case. Because no analytic solution of the root of $\mathcal{L}(\vec{\alphavec})$ is available in general, the root of Equation \eqref{eq:prop1-rootFinding} is instead solved using a semi-smooth Newton method. While an exact solution is not available, the benefit of this approach is that the root finding problem is significantly smaller $2r \ll \Nc$ and requires no additional evaluations of $\A[\cdot]$, so the additional cost of the small semi-smooth Newton solve is negligible. The computational burden of the overall algorithm is still dominated by the MVPs. The rank parameter $r$ is bounded by the iteration count, so since the method takes a dozen or fewer steps, $r$ stays small. Furthermore, if the root is found to a high enough tolerance (which is a mild assumption since the objective is piecewise linear), then there are theoretical guarantees that inexact proximal operator evaluations still lead to a convergent algorithm \citep{LeeSunSaunders2014SIAMJOptim}. The full method is described in \cref{alg:pqn} and the details on how to quickly solve the weighted proximal operator are available in \cref{app:solveProx-ssnewton}. 

\begin{algorithm}[ht]
\caption{Monofidelity Proximal Quasi-Newton (\MPQN)} \label{alg:pqn}
\begin{algorithmic}
\Procedure{ProximalQuasiNewton}{$\A[\cdot], \b, \x^{(0)}; k_{\max}, \epsilon_{\mathrm{kkt}}$}
    \State $\H^{(0)} \gets \mat{I}$ 
    \For{$k \gets 0:k_{\max}-1$}
        \If{\(\langle \xk[k], \A[\xk[k]] + \b\rangle < \epsilon_{\mathrm{kkt}}\)}
            \State \Return $\xk[k]$
        \EndIf
        \State $\q^{(k)} \gets \xk -\H^{(k)}(\A[\xk] +\b)$
        \State $\tilde{\x}^{(k)} \gets \xk + \tau^{(k)} \q^{(k)}$
        \State $\hat{\x}^{(k)} \gets \prox^{\B^{(k)}}_{\x \geq \vec{0}}(\tilde{\x}^{(k)})$ \Comment{Via \Cref{alg:semiSmoothNewton} in \Cref{app:solveProx-ssnewton}}
        \State $\p^{(k)} = \hat{\x}^{(k)} - \xk$
        \State $\eta^{(k)} \gets$ StepSizeSelection($\A[\cdot], \b, \xk, \p^{(k)}$) \Comment{Via \Cref{alg:stepSize}}
        \State $\xk[k+1] \gets \xk + \eta^{(k)}\p^{(k)} $ 
        \State $\B^{(k)}, \H^{(k)} \gets$ quasiNewtonUpdate($\xk[k+1] - \xk$, $\eta^{(k)} \A[\p^{(k)}]$) \Comment{See, e.g., \cite[Ch.~6]{NocedalWright2006}}
    \EndFor
    \State \Return $\xk[k+1]$
\EndProcedure
\end{algorithmic}
\end{algorithm}

\begin{remark}[No Extra MVPs]
    Classical quasi-Newton updating rules provide fast MVPs of both the Hessian approximation $\B^{(k)}[\cdot]$ and its inverse $\H^{(k)}[\cdot]$. Both can be computed in $\mathcal{O}(\Nc r)$. As $\B^{(k)}$ is obtained from a formula that only requires the last \(r\) gradients \(\{\A[\x^{(i)}] + \b\}_{i=\ceil{k-r,1} }^k\) and the last \(r\) iterates \(\{\x^{(i)}\}_{i=\ceil{k-r,1}}^k\), no additional applications of $\A[\cdot]$ are required. 
\end{remark}

\subsection{\MPQN Step Size Selection}\label{sec:pqn-stepSize}

Empirically, \MPQN benefited considerably from choosing an optimal over-relaxation parameter
\begin{equation}\label{eq:stepSize-bwdOpt}
    \eta^{(k)} = \underset{\xk + \eta \p \geq \vec{0}}{\argmin} \; f(\xk + \eta \p)
\end{equation}
where the iteration index on $\p$ is dropped for ease of notation. This one dimensional optimization problem can be solved in closed form without using any additional MVPs since $f$ is quadratic. This is accomplished by first computing the optimal step length for the unconstrained case, given by the conjugate gradient step length
\begin{equation*} \label{eq:unconstrainedOptimalStepSize}
    \eta^\prime = -\frac{\p^\top (\A[\xk]+\b) }{\p^\top\A[\p]}.
\end{equation*}
Then, one can identify step sizes $\hat{\eta}_{\ell}$ where each constraint would be violated for a particular element of the minimizer $\xk_{\ell} + \hat{\eta}_{\ell}\p_{\ell}$. Because the step direction is selected to be a descent direction, if a particular $\hat{\eta}_{\ell}$ is negative then it can be disregarded. By taking the minimum over these candidate step sizes, the optimum $\eta^*$ can be found in closed form. This procedure is detailed in \cref{alg:stepSize}.

\begin{algorithm}[ht!] 
\caption{Optimal over-relaxation parameter}\label{alg:stepSize}
\begin{algorithmic}[1]
\Procedure{StepSizeSelection}{$\A[\cdot], \b, \xk, \p$}
    \State \(\eta^\prime =  -\dfrac{\p^\top (\A[\xk]+\b) }{\p^\top\A[\p]}\) \Comment{Unconstrained optimal step size} 
    \State \(
        \hat{\eta}_{\ell} = 
        \begin{cases} 
            - \frac{\xk_{\ell}}{p_{\ell}}, & p_{\ell} < 0 \\
            \infty, &\mathrm{otherwise}
        \end{cases}
    \)  \Comment{Identify step lengths where constraints are violated}
    \State \(\eta^* = \min(\eta^\prime, \hat{\eta}_0, \cdots, \hat{\eta}_{\Nc})\) \Comment{Constrained Optimal  Step Size} 
    \State \Return \(\eta^*\)
\EndProcedure
\end{algorithmic}
\end{algorithm}

\begin{remark}[Computing $\eta^*$ Requires No Extra MVPs]
    Notice, one can cache the vector $\A[\p^{(k)}]$. This allows the optimal step size $\eta^*$ to be computed without incurring an additional MVP. Then, the next objective function value and gradient computation can combine the precomputed quantities $\eta^*, \A[\p^{(k)}]$ and $\A[\xk]$. Explicitly, the value of the next MVP is computed as $\A[\xk[k+1]] = \A[\xk] + \eta^*\A[\p^{(k)}]$. Numerical errors will slowly accumulate so this is not stable for large $k$ but works well when $k$ is a dozen or less (or with  periodic recomputation).
\end{remark}

The step size procedure of \Cref{alg:stepSize} ensures global convergence of \Cref{alg:pqn} under mild assumptions.
\begin{restatable}{lemma}{stepSizeLemma}
    \label{lemma:convergence}
    Assume that for all \(k > 0\) there exists \(\mu > 0\) and \( \eta_{\text{min}}>0\) such that \(\B^{(k)} \succeq \mu\mat{I}\)  and that \(\eta^{(k)} > \eta_{\text{min}}\), then the iterates $\xk$ from \Cref{alg:pqn} with \(\tau^{(k)} = 1\) converge to the optimal solution of \Cref{eq:qp} when \Cref{eq:prox-pqn} is solved exactly.
\end{restatable}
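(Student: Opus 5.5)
The plan is a standard descent argument for a convex quadratic over the orthant: show that $f$ decreases by an amount bounded below by a multiple of the squared step, then use the prox optimality condition to turn small steps into small KKT residuals. Write $F = f + \iota_C$ and $\nabla f(\x) = \A[\x]+\b$. With $\tau^{(k)}=1$, the point $\hat{\x}^{(k)}$ minimizes $m^{(k)}_{\mathrm{PQN}}$ in \eqref{eq:pqnQuadMdl}. The first-order optimality condition of this strongly convex subproblem ($\B^{(k)}\succeq\mu\mat{I}$) reads
$$-\nabla f(\xk) - \B^{(k)}\p^{(k)} \in N_C(\hat{\x}^{(k)}).$$
Testing it against the feasible point $\xk$ gives
$$\nabla f(\xk)^\top \p^{(k)} \le -\|\p^{(k)}\|_{\B^{(k)}}^2 \le -\mu\|\p^{(k)}\|^2 .$$
So $\p^{(k)}$ is a descent direction, and the segment $\xk+\eta\p^{(k)}$, $\eta\in[0,1]$, is feasible because $C$ is convex.

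Next comes the sufficient decrease. Since $f$ is quadratic, $\phi(\eta)=f(\xk+\eta\p)$ is a convex parabola, and \Cref{alg:stepSize} returns its exact minimizer over the feasible interval. I would compare $\phi(\eta^{(k)})$ with $\phi(\min(\eta^{(k)},\eta'))$ and use $\eta^{(k)}>\eta_{\min}$. Because $\phi$ decreases on $[0,\eta^{(k)}]$, I expect
$$f(\xk)-f(\xk[k+1]) \ge c\,\|\p^{(k)}\|^2, \qquad c=\tfrac12\min(\eta_{\min},\eta')\,\mu .$$
The term $\eta'$ also needs a positive lower bound. It satisfies $\eta' = -\p^\top\nabla f/\p^\top\A\p \ge \mu/\lambda_{\max}(\A)$, which follows from the descent inequality above.

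For convergence, $f$ is bounded below on $C$ (strictly so, since $\A\succ0$), so the per-step decreases are summable and $\p^{(k)}\to0$. The level set is bounded, so the iterates are bounded. For the KKT residual, the optimality condition gives
$$\operatorname{dist}\bigl(0,\ \nabla f(\hat{\x}^{(k)}) + N_C(\hat{\x}^{(k)})\bigr) \le \|\A-\B^{(k)}\|\,\|\p^{(k)}\| .$$
This tends to $0$ provided $\|\B^{(k)}\|$ stays bounded.

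I expect that boundedness to be the main obstacle. The lemma only assumes $\B^{(k)}\succeq\mu\mat{I}$, so I would argue it from the quasi-Newton structure (bounded rank $r$, and secant pairs whose curvature equals that of $\A$) or else add it as a mild implicit assumption. Granting it, every cluster point $\bar{\x}$ of $\hat{\x}^{(k)}$ (equivalently of $\xk$, since $\p^{(k)}\to0$ and $\eta^{(k)}\le1$ can be arranged) satisfies $0\in\nabla f(\bar{\x})+N_C(\bar{\x})$ by outer semicontinuity of the normal cone. That is exactly the LCP \eqref{eq:LCP}. Strict convexity makes the solution unique, so the whole sequence converges to the optimum of \eqref{eq:qp}.
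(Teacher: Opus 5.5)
Your route differs from the paper's. The paper only checks the Armijo condition, using exactly your inequality $\eta^{(k)}\le\eta'$ to get $f(\xk)-f(\xk[k+1])\ge\tfrac{\eta^{(k)}}{2}\bigl(-\nabla f(\xk)^\top\p^{(k)}\bigr)$, and then invokes the restated Lee--Sun--Saunders result (\Cref{th:convergenceLee}). You instead run the limit argument yourself, and unlike the paper you actually prove $\nabla f(\xk)^\top\p^{(k)}\le-\mu\|\p^{(k)}\|^2$. But there is a genuine gap exactly where you flag it. The KKT-residual step $\|\A-\B^{(k)}\|\,\|\p^{(k)}\|\to0$ needs $\sup_k\|\B^{(k)}\|<\infty$, and this cannot follow from $\B^{(k)}\succeq\mu\mat{I}$ and $\eta^{(k)}>\eta_{\min}$ alone. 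Indeed, with only those hypotheses the conclusion is false. Take $\A=\mat{I}$ and $\b=-\x^\star$, with $\x^\star$ deep inside the orthant. Any descent direction, suitably shortened, equals $-\H\nabla f(\xk)$ for some $\mat{0}\prec\H\preceq\mu^{-1}\mat{I}$. So you can make $\p^{(k)}$ form an angle $\theta_k$ with $-\nabla f(\xk)$, where $\sum_k\cos\theta_k<\infty$. The constraint never binds, $\eta^{(k)}=\eta'$ is as large as you like, and the exact step gives $\|\xk[k+1]-\x^\star\|=\sin\theta_k\,\|\xk-\x^\star\|$ with step length $\cos\theta_k\,\|\xk-\x^\star\|$. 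The iterates therefore converge to a point at distance $\prod_k\sin\theta_k\,\|\x^{(0)}-\x^\star\|>0$ from the solution. This example satisfies every hypothesis of \Cref{th:convergenceLee} as restated, so the paper's citation leaves the same point open; your argument has the merit of exposing it. Your suggested repairs are too vague, though. Bounded rank $r$ holds only with limited memory, and with full memory the trace identity gives only $\operatorname{tr}\B^{(k)}\le\operatorname{tr}\B^{(0)}+k\,\lambda_{\max}(\A)$.

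What closes the gap is the exactness of the secant pairs, $\y=\A\s$, together with the BFGS form of the update. That form is consistent with $\B^{(k)}=\B_0+\U\U^\top-\V\V^\top$ and is stated in \Cref{app:bpqn}. Set $\tilde{\B}=\A^{-1/2}\B\A^{-1/2}$ and take the unit vector $\tilde{\s}=\A^{1/2}\s/\|\A^{1/2}\s\|$. The update then reads $\tilde{\B}_+=\mat{M}+\tilde{\s}\tilde{\s}^\top$, where $\mat{M}=\tilde{\B}-\tilde{\B}\tilde{\s}\tilde{\s}^\top\tilde{\B}/(\tilde{\s}^\top\tilde{\B}\tilde{\s})\succeq\mat{0}$ satisfies $\mat{M}\tilde{\s}=\vec{0}$. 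Being a rank-one downdate, it also satisfies $\lambda_i(\mat{M})\le\lambda_i(\tilde{\B})$. Hence $\tilde{\s}$ is an eigenvector of $\tilde{\B}_+$ with eigenvalue $1$, so $\lambda_{\max}(\tilde{\B}_+)\le\max\{1,\lambda_{\max}(\tilde{\B})\}$. By induction, $\|\B^{(k)}\|\le\lambda_{\max}(\A)\max\{1,\|\B^{(0)}\|/\lambda_{\min}(\A)\}$ for all $k$; the analogous argument on the inverse update even makes $\B^{(k)}\succeq\mu\mat{I}$ automatic. With this bound the rest of your argument is correct.

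Some minor points:
\begin{itemize}
\item Comparing $\phi(\eta^{(k)})$ with $\phi(\min(\eta^{(k)},\eta'))$ is vacuous, since $\eta^{(k)}\le\eta'$ always. The decrease inequality above directly gives $c=\tfrac12\eta_{\min}\mu$.
\item The remark that $\eta^{(k)}\le1$ can be arranged is unnecessary, and not generally true of \Cref{alg:stepSize}, whose $\eta^{(k)}$ may exceed $1$. The fact $\hat{\x}^{(k)}-\xk=\p^{(k)}\to\vec{0}$ already matches the cluster points.
\item Your bound $\eta'\ge\mu/\lambda_{\max}(\A)$, together with $\min_\ell\hat\eta_\ell\ge1$ (because $\hat{\x}^{(k)}\ge\vec{0}$), gives $\eta^{(k)}\ge\min\{1,\mu/\lambda_{\max}(\A)\}$. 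So the hypothesis $\eta^{(k)}>\eta_{\min}$ is redundant.
\end{itemize}
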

\begin{proof}
This is a simple and direct application of Theorem 3.1 in \citep{LeeSunSaunders2014SIAMJOptim} specific to the quadratic case. The proof can be found in \Cref{app:stepSize}. 
\end{proof}
There are various other convergence results for proximal Newton-type methods in \cite{LeeSunSaunders2014SIAMJOptim}, such as linear and superlinear convergence under various assumptions. \Cref{alg:pqn} can be modified to follow these assumptions if one desires, but these assume eventual over-relaxation parameters of unity. In practice, such over-relaxation parameters were worse than the optimal step size found with \cref{alg:stepSize}. Furthermore, in our experiments \MPQN often converges in fewer than ten iterations $k \lesssim 10$, so asymptotic convergence rates are moot.

Interestingly, for PGD the optimal over-relaxation parameter $\eta^*$ degraded the performance compared to a fixed over-relaxation parameter $\eta =1$; this is not a contradiction since the optimal step size is optimal only in the greedy sense (of the current iteration) but may have eventual negative impacts for later iterations. We hypothesize that this observation is due to the angle between the forward and over-relaxation stepdirections $\arccos\left(\tfrac{(\q^\top \p)^2}{\|\q\|\|\p\|}\right)$ being more acute for \MPQN than for PGD. While \MPQN in constrained settings has no guarantees on the conjugacy of the new and previous search directions, we empirically observe that search directions are \textit{almost} conjugate. Thus doing local optimization over each individual over-relaxation parameter does not degrade global performance as it often does for PGD. 

\section{Multifidelity} \label{sec:multiFidelity}
Multifidelity methods are built on the assumption that the solutions of two related problems, one of which is significantly less expensive to compute (usually at a trade-off with fidelity), will be close to each other. Care is needed with such an assumption, as for any particular problem of interest there is no guarantee that a solution map will be robust to perturbation. Luckily, LCPs do fulfill this assumption; the solution map is locally Lipschitz. We use the notation $\mathbb{S}^{\Nc\times \Nc}_{++}$ to denote the set of $\Nc\times\Nc$ positive definite real-valued matrices. 
\begin{restatable}{lemma}{cottleLemma} \label{lemma:cottleLemma}
    Let $\A \in \mathbb{S}^{\Nc\times \Nc}_{++}$ and $\b \in \R^\Nc$ be such that the corresponding LCP has a solution $\x\in \mathbb{R}^\Nc$. Then, for any \(\delta \in (0, c(\A))\), there exists a neighborhood \(\mathcal{U}_\delta\) of \(\A\)
    \begin{equation*}
        \mathcal{U}_{\delta} = \{\B : \left\|\A-\B\right\|_{\infty} \leq \delta\}
    \end{equation*}
    such that for all \(\A^\prime\), \(\A^{\prime\prime} \in \mathcal{U}_\delta\) the following holds
    \[\left\|\x^\prime-\x^{\prime\prime}\right\|_{\infty} \leq \frac{\left\|(-\b)_+\right\|_{\infty}}{(c(\A) - \delta)^2}\left\|\A^\prime-\A^{\prime\prime}\right\|_{\infty}\]
    where $\x^\prime$ and $\x^{\prime\prime}$ are the unique solutions of $\left(\b, \A^\prime\right)$ and $\left(\b, \A^{\prime\prime}\right)$ and \(c(\A)\) is the \emph{fundamental quantity} of \(\A\)
\[c(\A)=\min _{\|\z\|_{\infty}=1}\left\{\max_{1 \leq i \leq n} \z_i(\A \z)_i\right\}.\]
\end{restatable}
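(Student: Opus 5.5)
The plan is to use the standard P-matrix perturbation theory for LCPs, in the spirit of Cottle–Pang–Stone, built on the fundamental quantity $c(\cdot)$. Two ingredients are needed. First, $c(\cdot)$ is Lipschitz in the $\infty$-norm, which keeps it bounded away from zero on $\mathcal{U}_\delta$. Second, solutions of LCPs with P-matrices obey a Lipschitz bound in the data whose constant is $1/c$.

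\textbf{Step 1 (uniform P-property on $\mathcal{U}_\delta$).} For $\|\z\|_\infty=1$ and any index $i$, we have $|z_i((\B-\A)\z)_i|\le \|(\B-\A)\z\|_\infty\le\|\B-\A\|_\infty$. Hence $\max_i z_i(\B\z)_i\ge \max_i z_i(\A\z)_i-\delta$, so $c(\B)\ge c(\A)-\delta>0$ for every $\B\in\mathcal{U}_\delta$. Since $\A\succ 0$ gives $c(\A)>0$, each $\B\in\mathcal{U}_\delta$ (not necessarily symmetric) is a P-matrix. The LCP $(\b,\B)$ therefore has a unique solution, which justifies the uniqueness claim for $\x',\x''$.

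\textbf{Step 2 (a priori bound).} Let $\x$ solve $(\b,\B)$ and set $\w=\B\x+\b\ge0$ with $x_i w_i=0$. For each $i$, $x_i(\B\x)_i=x_i(w_i-b_i)=-x_ib_i\le x_i(-b_i)_+$, using $x_i\ge0$. Apply the definition of $c(\B)$ to $\z=\x/\|\x\|_\infty$: there is an index $i$ with $x_i(\B\x)_i\ge c(\B)\|\x\|_\infty^2$. Since the left side is at most $\|\x\|_\infty\|(-\b)_+\|_\infty$, we get $\|\x\|_\infty\le \|(-\b)_+\|_\infty/c(\B)$.

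\textbf{Step 3 (perturbation inequality).} Let $\x',\x''$ solve $(\b,\A')$ and $(\b,\A'')$, and put $\w'=\A'\x'+\b$, $\w''=\A''\x''+\b$. Complementarity and nonnegativity give, componentwise,
\[(x'_i-x''_i)(w'_i-w''_i)=-x'_iw''_i-x''_iw'_i\le 0.\]
Write $\w'-\w''=\A'(\x'-\x'')+(\A'-\A'')\x''$. Then
\[(x'_i-x''_i)\bigl(\A'(\x'-\x'')\bigr)_i\le -(x'_i-x''_i)\bigl((\A'-\A'')\x''\bigr)_i\le |x'_i-x''_i|\,\|\A'-\A''\|_\infty\|\x''\|_\infty .\]
Apply the definition of $c(\A')$ to $\z=(\x'-\x'')/\|\x'-\x''\|_\infty$, assuming $\x'\ne\x''$ since otherwise there is nothing to prove. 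This yields an index $i$ with left side at least $c(\A')\|\x'-\x''\|_\infty^2$. Dividing by $\|\x'-\x''\|_\infty$ gives
\[\|\x'-\x''\|_\infty\le \|\A'-\A''\|_\infty\|\x''\|_\infty/c(\A').\]
Finally, combine this with Step 2 applied to $\x''$ and with Step 1, which gives $c(\A'),c(\A'')\ge c(\A)-\delta$. This produces the claimed constant $\|(-\b)_+\|_\infty/(c(\A)-\delta)^2$.

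The main obstacle is Step 3. The sign inequality from complementarity must be paired with the max-type definition of $c$ at the \emph{same} index, so the estimate has to be per-component and not summed. The sum version would correspond to an inner-product/coercivity argument, which only works for positive definite perturbations and would give a $2$-norm constant instead. The per-index bound above holds for every $i$, so it holds in particular at the maximizing index, and this resolves the issue. Note that the hypothesis that the LCP for $\A$ has a solution is automatic here and is not otherwise used.
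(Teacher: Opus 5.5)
Your proof is correct. The paper does not argue this lemma itself: it cites Lemma 7.3.10 of Cottle, Pang and Stone with $\b$ held fixed, and the appendix only restates it. You instead give a self-contained proof of this special case from the standard P-matrix ingredients behind that lemma:
\begin{itemize}
\item the $1$-Lipschitz dependence of $c(\cdot)$ on the matrix;
\item the a priori bound $\|\x\|_\infty\le\|(-\b)_+\|_\infty/c(\B)$;
\item the sign-reversal estimate of Step 3.
\end{itemize}
Step 3 amounts to viewing $\x''$ as the solution of the LCP with matrix $\A'$ and vector $\b+(\A''-\A')\x''$, and then using the $1/c(\A')$ Lipschitz bound in the vector.

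Writing it out shows several things the citation hides. Positive definiteness of $\A$ is used only to get $c(\A)>0$, so the result holds for any P-matrix $\A$. The solvability hypothesis is redundant. Uniqueness already follows from your Step 3 with $\A'=\A''$, so the only outside input is existence for P-matrix LCPs. You also obtain the slightly sharper constant $\|(-\b)_+\|_\infty/\bigl(c(\A')c(\A'')\bigr)$. The citation, for its part, also covers perturbations of $\b$.

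Two small points. First, in Step 2 you should dispose of $\x=\vec{0}$ before normalizing.

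Second, and more important, Steps 1 and 3 use $\|\mat{M}\z\|_\infty\le\|\mat{M}\|_\infty\|\z\|_\infty$. So you are reading the matrix norm as the induced max-row-sum norm. This is the Cottle--Pang--Stone convention, and it is essential. With the entrywise max norm the statement fails: take $\Nc=3$, $\A=\mat{I}$ (so $c(\A)=1$), $\delta=0.9$, $\b=0.8\,\vec{1}$, and $\A'=\A''$ with unit diagonal and off-diagonal entries $-0.9$. Then both $\vec{0}$ and $\vec{1}$ solve that LCP. You should state this reading explicitly.
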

This result is a specialization of Lemma 7.3.10 in \citep{CottlePangStone2009} assuming a fixed \(\b\). For more details on computing \(\c(\A)\) and application of \cref{lemma:cottleLemma}; see \cref{app:LipshitzConst}. \Cref{lemma:cottleLemma} states that perturbed LCPs still have unique solutions, and their solutions will be close to each other if the corresponding matrices are close to each other. Thus, LCPs are a strong candidate for multifidelity methods. We now focus on multifidelity methods in the context of the Stokes mobility problem.

\subsection{Multifidelity For the Stokes mobility problem}
The Stokes mobility problem can be solved to varying levels of accuracy, or \emph{fidelity}. Recall that the LCP matrix is of the form shown in \cref{fig:mobilityMatrixDiag} and \cref{eq:defAandb} 
\[\A = \DD^\top \PP \MMstk \PP^\top \DD.\]
The computational cost is dominated by the application of $\MMstk$ which involves a Stokes PDE solve; the exact composition of $\MMstk$ depends on which numerical method is used to formulate and solve the Stokes mobility problem and to what accuracy. There are multiple different notions of what could be considered fidelity in this context.
\begin{itemize}
    \item \textbf{Coarseness of discretization:} Each instance of \(\MMstk\) involves a discretization of the underlying mobility problem; the level of refinement can then be tied to fidelity. In our BIM approach, order-\(p\) spherical harmonics bases are used to represent densities on each surface, corresponding to \(N = 6 \Np p(p + 1)\) total degrees of freedom. Applying \(\MMstk\) has an asymptotic cost of \(\mathcal{O}(M(p^4+p^2))\) and can be improved to \(\mathcal{O}(M(p^3 \log p+p^2))\) applying FFT-based acceleration of near-evaluation \cite{CoronaVeerapaneni2018JournalofComputationalPhysics}. This polynomial dependence on $p$ implies that coarser discretizations can lead to substantially faster low fidelity MVPs.
    \item \textbf{Tolerances:} Many of the methods used to construct \(\MMstk\) require some type of iterative method, particularly when solving linear systems. These iterative methods solve a system up to a desired tolerance, which could be higher or lower depending on the desired fidelity. When constructing \(\MMstk\) for this work, a traction BIE solve is needed which is done with GMRES until the tolerance \(\epsilon_\mathrm{gmres}\) is met.
    \item \textbf{Sparsification:} Depending on how \(\MMstk\) is constructed, certain components could be purposely \emph{sparsified}. For example, this could be done by only considering self or near field interactions in the boundary integral kernels.
\end{itemize}

Once a low fidelity model is specified, a corresponding low fidelity LCP matrix $\hat{\A}$ can be defined. Although a low fidelity model will be less computationally expensive than the high fidelity model, this does not mean that the low fidelity model is so inexpensive that we can form \(\hat{\A}\) densely. Therefore, like the high fidelity \(\A\), we assume that we only have access to a low fidelity MVP \(\hat{\A}[\cdot]\). Furthermore, there is no guarantee that a given \(\hat{\A}\) will lead to a computational advantage in a multifidelity algorithm compared to a monofidelity algorithm. An appropriate balance of accuracy and computational efficiency of the low fidelity \(\hat{\A}\) is needed. Thus, finding a suitable \(\hat{\A}\) requires experimentation to tune the low fidelity model's parameters.

\subsubsection{Choosing Multifidelity MVPs for the BIM approach}
We consider low fidelity models parameterized by the coarseness of discretization dictated by spherical harmonic order \(p\) and linear BIE solver tolerance \(\epsilon_\mathrm{gmres}\); while sparsification was considered in our initial experiments, we found its performance to be less reliable. In order to identify which parameters to choose, we need to characterize computational cost and accuracy of differing low fidelity models on problems of interest. 

To this end, fifty example LCPs were generated from the simulation of spherical amphiphilic particles. For each example, dense representations of $\A$ were made for \(p \in \{3,4,5,6,7,8\}\) and \(\epsilon_{\mathrm{gmres}} = \{10^{-5}, 10^{-6}, 10^{-7}, 10^{-8}\}\) with the high fidelity \(\A\) defined by \(p = 8, \epsilon_{\mathrm{gmres}} = 10^{-8}\) with all other simulation details the same as in \cref{sec:res-offline}. To ensure a fair comparison, all low fidelity dense matrices were formed from particle configurations obtained during a simulation using a well tested monofidelity collision resolution algorithm. Then, for each pair $(p,\epsilon_\mathrm{gmres})$ and each particle configuration, we compute the average MVP time and absolute error between the dense low fidelity and the dense high fidelity matrices \(\hat{\A}\) and \(\A\); these dense LCP matrices were formed by applying the MVP to each column of the identity, requiring several days of computation time. 

Our aim then is to identify specific pairs (\(p\),\(\epsilon_\mathrm{gmres}\)) for which the gain in fidelity of \(\hat{\A}\) (measured as log of absolute error) relative to computational cost (in high fidelity MVPs) yields a locally optimal trade-off. Such parameter combinations are good low fidelity model candidates.

In \cref{fig:lofiMVPs}, the plot on the right shows a heatmap for the ratio between the average $\A$ MVP time and the average $\hat{\A}(p,\epsilon_\mathrm{gmres})$ MVP time, i.e., how many low fidelity MVPs can be computed for the cost of one high fidelity MVP. The plot on the left is the corresponding heatmap for the log of the absolute error between low fidelity and high fidelity dense matrices with respect to the infinity norm.

\begin{figure}[ht!]
     \centering 
    \includegraphics[width=0.45\textwidth]{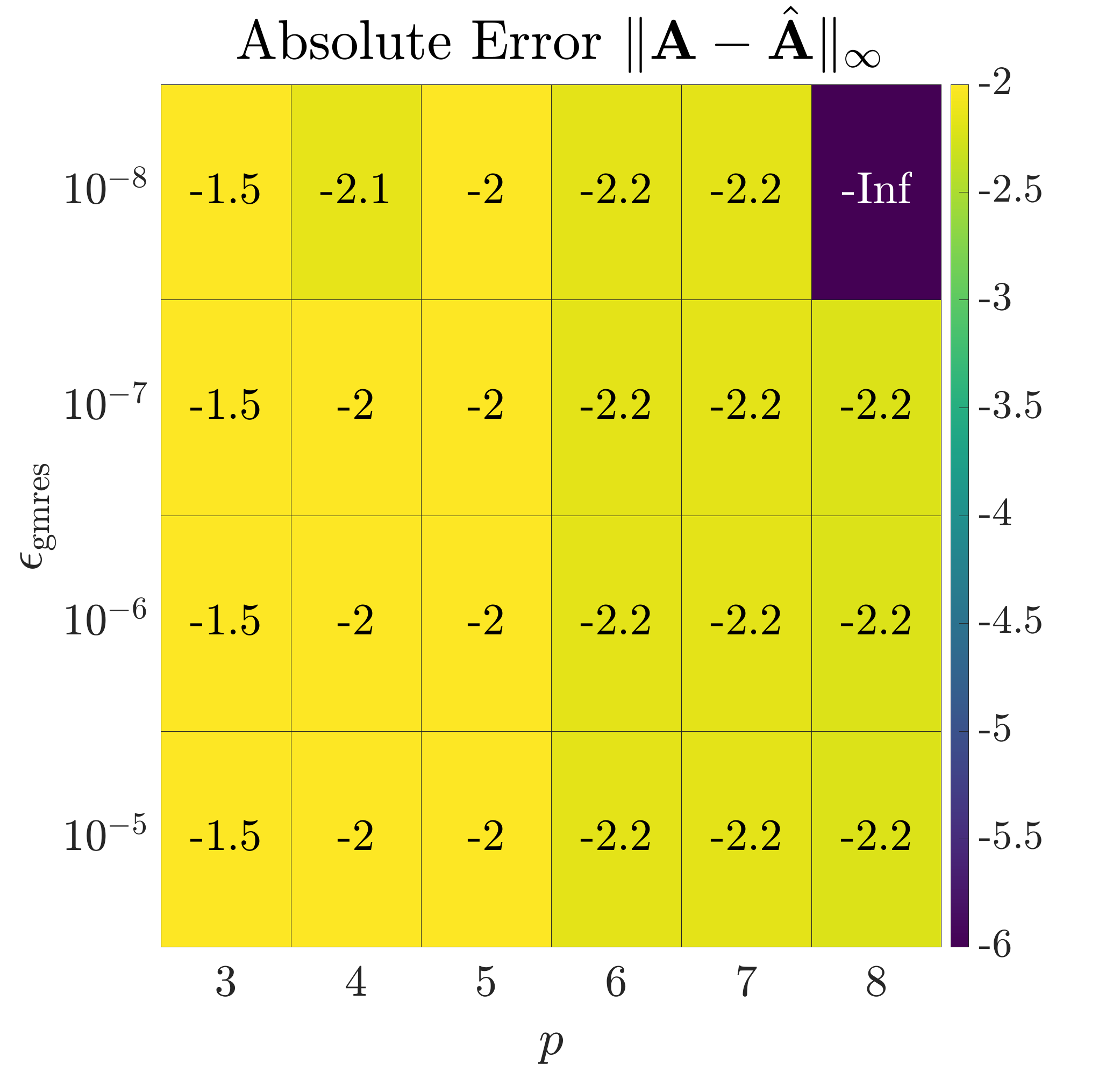}
    \includegraphics[width=0.45\textwidth]{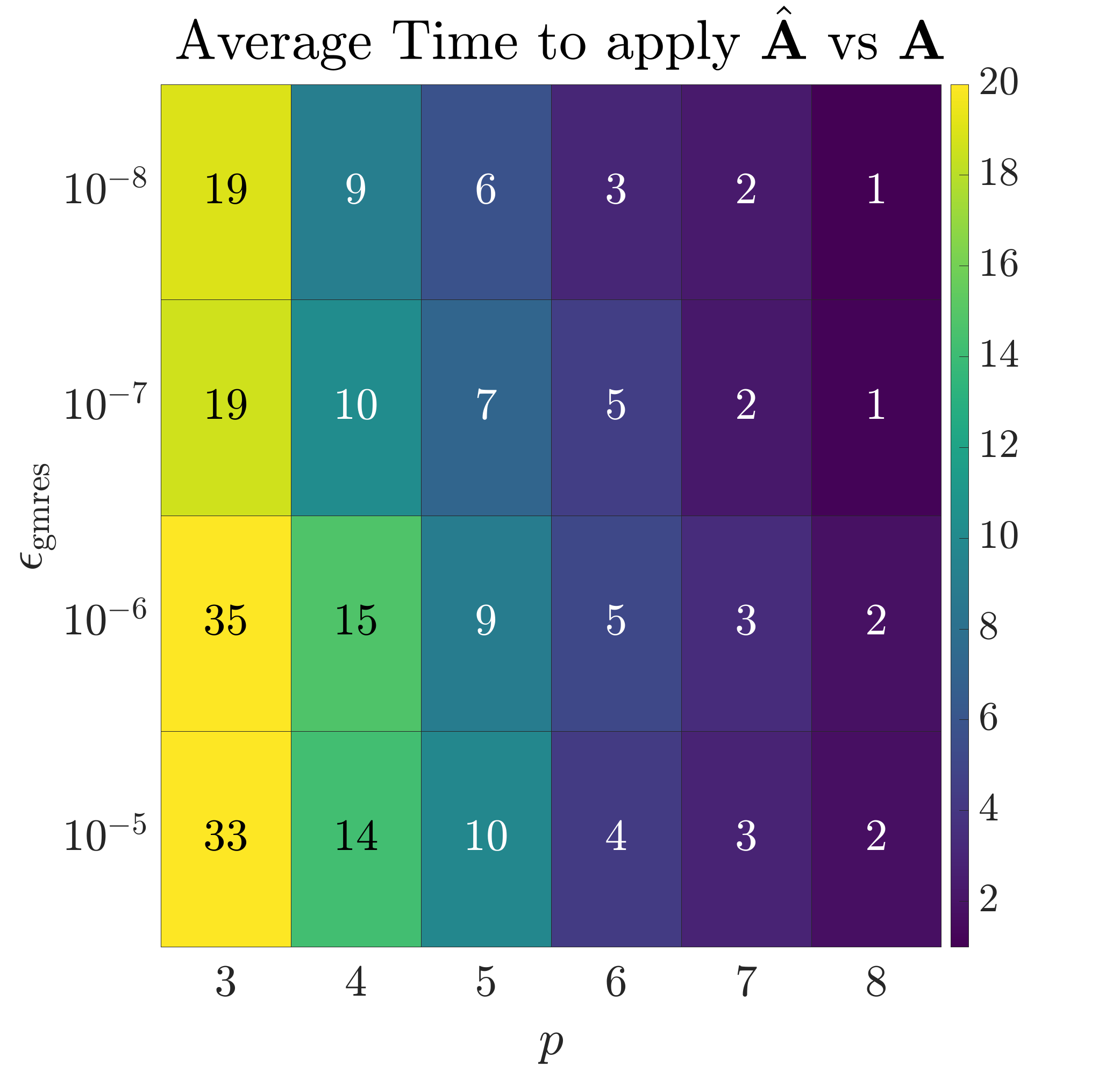}
    
    \caption{ 
    \textbf{Left:} Heatmap of $\log_{10}$ of the maximum (over 50 realizations) absolute error between low and high fidelity LCP matrices. \textbf{Right:} Heatmap displaying average number of low fidelity MVPs computed for the cost of one high fidelity MVP.} \label{fig:lofiMVPs}
\end{figure}
With these plots, values of \(p\) and \(\epsilon_\mathrm{gmres}\) can be identified that are promising candidates for low fidelity operators.
\begin{itemize}
    \item \textbf{Choosing \(p\)}: 
    Ideally, $p$ should be chosen to maximize a metric of accuracy per unit of computational cost; however, it is challenging to a priori and precisely translate this into what will minimize overall cost in our multifidelity algorithm. Instead, we opt for the following heuristic: we identify values of \(p\) such that digits of accuracy improve measurably compared to \(p - 1\), while their computational cost is relatively similar. These are \(p = 3, 4\) and \(6\). For each of these values of \(p\), the absolute error of the formed matrices is about \(3\) to \(4\) times less than for \(p - 1\), while the computational cost is $\lesssim 2$ times more expensive. In contrast, consider \(p = 5\), where the absolute error is only about \(3/4\) that of \(p = 4\). We did not consider \(p = 7\) as it did not offer significant computational savings compared to \(p = 8\).
    \item \textbf{Choosing \(\epsilon_\mathrm{gmres}\)}: Surprisingly, perhaps, computational cost did not appear to improve considerably as we increased the tolerance for the iterative linear solver $\epsilon_\mathrm{gmres}$. This result is likely due to the benign, low condition number characteristic of Fredholm BIE equations of the $2$nd kind and would have to be revisited in the presence of harsher conditioning. In our case, we therefore decided that a tight tolerance $\epsilon_\mathrm{gmres} \leq 10^{-6}$ was prudent and relatively efficient.
\end{itemize}
Thus, in this setting, our candidate low fidelity operators were \(p \in \{3,4,6\}\) and \(\epsilon_{\mathrm{gmres}} \leq 10^{-6}\). 

\subsection{A Bifidelity Proximal Quasi-Newton Method} \label{sec:multiFidelity-bpqn}

Assuming one selected a good candidate low fidelity MVP, we now present a novel method to efficiently solve \cref{eq:qp} incorporating both high and low fidelity MVPs. Importantly, in the presentation of this method, we do not make any assumptions upon the formulation of the low fidelity MVP \(\hat{\A}[\cdot]\), only that such a representation exists.

When choosing a quadratic model \(\B^{(k)}\) for a proximal quasi-Newton method, we have two important goals: the model should be as close to \(\A\) as possible and allow for efficient solutions to the proximal subproblem specified in \cref{eq:prox-pqn}. These two are often in conflict, and this can be seen in \cref{alg:pqn}; we have fast solves of the subproblem with \cref{prop-nnProx}, but we require \(\B^{(k)}\) to be a diagonal matrix plus a rank \(2r\) perturbation. We would like to find an alternative Hessian approximation that more closely resembles the true \(\A\) and can enable relatively fast applications of the weighted proximal operator. This is where the low fidelity MVP \(\hat{\A}\) can be utilized; choosing \(\B^{(0)} = \hat{\A}\) yields a substantial improvement in the in the accuracy of the local model compared to a diagonal matrix. Then, throughout optimization corrections to the local quadratic model can be made by satisfying high fidelity secant conditions
\begin{equation}\label{eq:bifi-qnapprox}
    \B^{(k)} = \hat{\A} + \U\U^\top - \V\V^\top
\end{equation}
as is done in a typical proximal quasi-Newton method.

This requires evaluating a weighted proximal operator at each iteration of the form in \cref{eq:prox-pqn}. Computing this expression directly employs computing \(\H^{(k)}[\nabla f(\x_k)]\); unfortunately, this is not feasible because the inverse of \(\hat{\A}\) is only available through the use of an iterative solver. Furthermore, \cref{prop-nnProx} is not applicable as \(\hat{\A}\) cannot be assumed to be diagonal. Thus, we minimize the quadratic model of \cref{eq:pqnQuadMdl} directly without using duality. Discarding terms that do not affect the location of the minimizer in \cref{eq:pqnQuadMdl}, one obtains a quadratic program
\begin{equation} \label{eq:pqnSubproblem}
    \hat{\x}^{(k)} = \argmin_{\z \geq \vec{0}} \tfrac{1}{2}\z^\top\B^{(k)}\z + (\A\xk +\b - \B^{(k)}\xk)^\top\z
\end{equation}
\begin{comment}
\end{comment}
 which can be solved with \emph{only low fidelity} MVPs. 
Naively, utilizing the rich local quadratic model induced from \cref{eq:bifi-qnapprox} can lead to a significant speedup when \cref{eq:pqnSubproblem} is solved with a first order method; we choose to use \Cref{alg:pqn}.  
However, this choice alone does not utilize all the structure in the subproblem. The gradient of the subproblem at iteration \(k\) will be of the form
\begin{equation*}
    \B^{(k)}\z + (\A\xk +\b - \B^{(k)}\xk)
\end{equation*}
and thus the secant information from the subproblem solve will be
\begin{equation*}
    \B^{(k)}\hat{\mat{S}}^{(k)}_{\text{low}} = \hat{\mat{Y}}^{(k)}_{\text{low}}.
\end{equation*}
This secant information can then be cached and reused in the next subproblem solve at iteration \(k + 1\), but the secant conditions should correspond to the next Hessian approximation \(\B^{(k+1)}\) and not the current \(\B^{(k)}\). To resolve this, recall that \(\B^{(k + 1)} = \B^{(k)} + \vec{u}^{(k)}(\vec{u}^{(k)})^\top -  \vec{v}^{(k)}(\vec{v}^{(k)})^\top\) and thus the updated secant information at \(k + 1\) can be found by computing 
\begin{align*}
    \mat{S}^{(k + 1)}_{\text{low}} &= \hat{\mat{S}}^{(k)}_{\text{low}} \\
    \mat{Y}^{(k + 1)}_{\text{low}} &= \hat{\mat{Y}}^{(k)}_{\text{low}} - \left(\vec{u}^{(k)}(\vec{u}^{(k)})^\top -  \vec{v}^{(k)}(\vec{v}^{(k)})^\top\right)\hat{\mat{S}}^{(k)}_{\text{low}}.
\end{align*}
The process repeats and thus secant conditions from every previous subproblem are accumulated for use in the current subproblem solve. This accelerates the convergence of the subproblem solver. Furthermore, we can use a \emph{low fidelity initialization} for the algorithm by solving the low fidelity version of \cref{eq:qp} 
\begin{equation*}
    \x^{(0)} = \argmin_{\x \geq \vec{0}} \frac{1}{2}\x^\top\hat{\A}\x + \b^\top\x
\end{equation*} approximately using \MPQN. \Cref{alg:bpqn} succinctly describes our \BPQN method; though, we omit describing the secant condition re-use in the subproblem. More technical details are available in \cref{app:bpqn}.

\begin{algorithm}[ht]
\caption{Bifidelity Proximal Quasi-Newton (\BPQN)} \label{alg:bpqn}
\begin{algorithmic}
\Procedure{BifidelityProximalQuasiNewton}{$\A[\cdot], \b, \x^{(-1)}, \hat{\A}; k_{\max}, \epsilon_{\mathrm{kkt}}$}
    \State $\x^{(0)}, \mat{S}_{\text{low}}^{(0)}, \mat{Y}_{\text{low}}^{(0)} \gets$ ProximalQuasiNewton$\bigl(\hat{\A}[\cdot], \b, \x^{(-1)})$
    \State $\B^{(0)} \gets \hat{\A}$
    \For{$k \gets 0:k_{\max}-1$}
        \If{\(\langle \xk[k], \A[\xk[k]] + \b\rangle < \epsilon_{\mathrm{kkt}}\)}
            \State \Return $\xk[k]$
        \EndIf
        \State $\hat{\x}^{(k)}, \hat{\mat{S}}_{\text{low}}^{(k)}, \hat{\mat{Y}}_{\text{low}}^{(k)} \gets$ ProximalQuasiNewton$\bigl(\B^{(k)}[\cdot], (\A[\xk] + \b - \B^{(k)}\xk), \xk, \mat{S}_{\text{low}}^{(k)}, \mat{Y}_{\text{low}}^{(k)}\bigr)$ %\Comment{See \cref{alg:pqn}}
        \State $\p^{(k)} = \hat{\x}^{(k)} - \xk$
        \State $\eta^{(k)} \gets$ StepSizeSelection($\A[\cdot], \b, \xk, \p^{(k)}$) \Comment{See \cref{alg:stepSize}}
        \State $\xk[k+1] \gets \xk + \p^{(k)} \eta^{(k)}$ 
        \State $\B^{(k)}, \H^{(k)} \gets$ quasiNewtonUpdate($\xk[k+1] - \xk$, $\eta^{(k)} \A[\p^{(k)}]$) \Comment{See, e.g., \cite[Ch.~6]{NocedalWright2006}}
        \State $\mat{S}_{\text{low}}^{(k + 1)} \gets \hat{\mat{S}}^{(k)}_{\text{low}}$ 
        \State $\mat{Y}_{\text{low}}^{(k + 1)} \gets \hat{\mat{Y}}^{(k)}_{\text{low}} - \left(\vec{u}^{(k)}(\vec{u}^{(k)})^\top -  \vec{v}^{(k)}(\vec{v}^{(k)})^\top\right)\hat{\mat{S}}^{(k)}_{\text{low}}$
    \EndFor
    \State \Return $\xk[k+1]$
\EndProcedure
\end{algorithmic}
\end{algorithm}
As \cref{alg:bpqn} is a proximal Newton-type method that uses the optimal step size, the same convergence guarantees apply to \BPQN as they apply to \MPQN. Thus, under mild assumptions, \BPQN is globally convergent via \cref{lemma:convergence}. The discussion regarding linear and superlinear convergence of \BPQN is the same as it was for \MPQN; theoretical linear and superlinear convergence are obtainable but the modifications needed lead to a practically worse algorithm.

\section{Numerical Results} \label{sec:res}

In this section, a series of experiments demonstrate the performance of our methods in addressing the computational bottleneck posed by optimization-based collision resolution in dense Stokesian suspensions, as compared to state-of-the-art methods. In line with the guiding principle stated in our introductory section, the metric of interest to compare algorithmic performance is set to the number of \emph{effective matrix vector products} (E-MVPs). For algorithms that only utilize the high fidelity MVP $\A[\cdot]$, then E-MVPs is the same as MVPs. For \BPQN, the cost of low fidelity MVPs is converted to E-MVPs by scaling the number of low fidelity MVPs by the ratio between low and high fidelity MVP costs, and added to the number of high fidelity MVPs. This ratio is re-calculated for every timestep to more accurately account for its variation with respect to particle configurations.

\paragraph{Experimental Set Up} 
All experiments in this work involve the simulation of dense suspensions of Janus particles, following the formulations in \cite{KohlCoronaCheruvuEtAl2021arXiv210414068}. Janus particles, as their name suggests, are characterized by anisotropic structure involving two hemispheres that differ in electric, magnetic, hydrophilic, etc.\ properties.  Notably, dense suspensions of Janus particles demonstrate complex aggregate behavior such as clustering \citep{HongCacciutoLuijtenEtAl2008Langmuir}, self-assembly \citep{JiangGranick2012}, and formation of larger-scale structures \citep{WaltherMuller2008SoftMatter}. Large-scale simulation of Janus particle systems is arguably an ideal testbed, as their characteristic behavior induces persistently large collisional problems, and each MVP involves an expensive solve of a Stokes mobility problem. 

We demonstrate the practical performance of our algorithms on two sets of experiments. First, we ran a simulation of $\Np=125$ Janus particles in a lattice ($5\times5\times5$) for 250 time steps. The particles were spaced such that they were close, but very few collisions would occur at the initial time. The amphiphilic tails of the particles are initialized to point towards the origin which causes the particles to slowly move towards each other; at time steps 201-250 the particles' configuration is tightly packed, and the collision resolution problems' sizes are all such that $\Nc \gtrapprox \Np$, as particles cluster to hide hydrophobic regions from the fluid. Then, using the saved configurations from each time step, we generated dense representations of $\A$ and $\b$ in a post process for time steps 201-250. This allows us to test \emph{offline} which facilitated comparison of a wide selection of methods. These results are shown in \cref{sec:res-offline}. 

Next, in \cref{sec:res-online} our algorithms are compared in an \emph{online} setting. In this test case, we run experiments on several different sized lattices for \(200\) time steps to test whether our method scales favorably as  $\Np$ (and thus $\Nc$) increases. The initial configuration is created so that $\mathcal{O}(\Np)$ collisions will be considered in the first time step. Full details of how the initial configuration is generated are specified in \cref{app:initConf}. 
 
All experiments in this work employ a serial Matlab implementation of the BIM-based mobility solver in \cite{CoronaVeerapaneni2018JournalofComputationalPhysics,KohlCoronaCheruvuEtAl2021arXiv210414068} with an openMP Stokes FMM from \citep{GimbutasGreengard2023STKFMMLIB3D} for far field interactions. The simulation used for the densely saved LCP matrices and the offline comparison was run on the Blanca condo cluster \citep{universityofcoloradoboulderresearchcomputingBlancaCondoCluster2021} using AMD Milan CPUs. Once saved, the offline comparison was run on a M3 Apple Macbook Pro. Online simulations were run on the Alpine cluster \citep{universityofcoloradoboulderresearchcomputingAlpine2023} using 24 cores of an AMD Milan CPU with 400 GB of RAM.

\paragraph{Hyperparameters and Convergence}
All solvers use \((p=8, \epsilon_{\mathrm{gmres}} = 10^{-8})\) for the high fidelity $\A$, and \BPQN utilize \((p \in \{3,4,6\}, \epsilon_{\mathrm{gmres}} \leq 10^{-6})\) for the low fidelity \(\hat{\A}\). LCP solvers are considered to have converged when the absolute or relative KKT error is less than a tolerance of $10^{-8}$, where these errors are defined as  
\begin{equation}\label{eq:kkt-error}
    \epsilon_{\mathrm{kkt}_{\mathrm{abs}}}^{(k)} = \left\|\min(\x, \A[\x]+\b)\right\|, \; \epsilon_{\mathrm{kkt}_{\mathrm{rel}}}^{(k)} = \frac{\bigl\lvert\epsilon_{\mathrm{kkt}_{\mathrm{abs}}}^{(k)} - \epsilon_{\mathrm{kkt}_{\mathrm{abs}}}^{(k-1)}\bigr\rvert}{\max\left(\epsilon_{\mathrm{kkt}_{\mathrm{abs}}}^{(k)},\epsilon_{\mathrm{kkt}_{\mathrm{abs}}}^{(k-1)}\right)}.
\end{equation}
We found consistent results for slightly smaller and larger tolerances, and we selected a tolerance $10^{-8}$ because it leads to physical behavior in our simulations. 

\paragraph{Comparison Methods} \label{sec:res-compMeth}

Gradient descent and its accelerated variants are the simplest and most ubiquitous gradient based methods for unconstrained optimization; proximal gradient descent\footnote{Elsewhere this method is referred to as projected gradient descent, but it is often preferable to understand projected gradient descent as a special case of proximal gradient descent.} (PGD) is the corresponding method for constrained optimization. When specialized to \cref{eq:qp}, convergence is guaranteed for a positive constant step size $\tau^{(k)}=\tau < 2/\|\A\|_2$ \citep{NocedalWright2006}; this can, however, be overly conservative. It is also possible to use line searches, e.g., backtracking. Because MVP's are at a premium, estimating the spectral norm or performing a line search are both cost prohibitive, as they 
require many additional evaluations of $\A[\cdot]$. For this reason, a heuristic step size called the Barzilai-Borwein or spectral step size is appealing:
\begin{equation} \label{eq:spectralStepSize}
\tau_{\mathrm{bb}} = \frac{\bigl\|\xk - \x^{(k-1)}\bigr\|^2}{\bigl(\xk - \x^{(k-1)}\bigr)^\top\bigl(\A [\xk] - \A [\x^{(k-1)}]\bigr)}.
\end{equation}
This step size can be interpreted as a $0^\mathrm{th}$ order approximation of the Hessian and has seen practical success. We refer to this as BB-PGD; we found this step size was the best performing method to solve \cref{eq:qp} in \cite{YanCoronaMalhotraEtAl2020JournalofComputationalPhysics}. Unfortunately, it has few theoretical guarantees without making very strong assumptions and, in fact, counter examples of quadratic objectives with box constraints can be constructed such that BB-PGD will not converge~\citep{dai2005projected}. This method serves as a baseline to compare our methods to because it is currently the state-of-the-art for BIM formulations in dense regimes.

The accelerated variant of proximal gradient descent (A-PGD) is another comparison-worthy algorithm. Optimization theory \citep{Nesterov1983ProcUSSRAcadSci, Nesterov2013MathProgram, BeckerBobinCandes2011SIAMJImagingSci, BeckTeboulle2009SIAMJImagingSci} would suggest that A-PGD would converge faster than PGD since it has a better dependence on the condition number, but this requires a line search or knowledge of the Lipschitz constant of the gradient which, again, is not available without additional MVPs. We experimented with heuristics such as restarting or a coarse estimate of the Lipschitz constant but found no strategy that was competitive. The implementation and discussion in \citep{goldsteinFieldGuideForwardBackward2016} were used as a starting point for our implementation. In \cref{sec:res-offline}, A-PGD is given a \textit{known} Lipschitz constant $L(\A)$ and strong convexity constant $\mu(\A)$ to demonstrate that even in an unrealistically generous circumstance, A-PGD is not competitive. Our findings corroborate \citep{YanCoronaMalhotraEtAl2020JournalofComputationalPhysics} who observed that A-PGD was outperformed by BB-PGD. One possible explanation is that A-PGD improves the dependence on the condition number $\kappa$ from $\mathcal{O}(1-1/\kappa)$ to $\mathcal{O}(1-1/\sqrt{\kappa})$, but our LCP problems are well-conditioned, so for $\kappa = \mathcal{O}(10)$ the square-root improvement is minor. Also, because A-PGD often converged in around 15 iterations, it may be that the benefits of momentum can not be fully realized in such few iterations.

We also show results for two other proximal quasi-Newton methods: zeroSR1 and the well-known L-BFGS-B. The zeroSR1 algorithm is a zero memory variant that facilitates the weighted proximal operator to be evaluated in $\mathcal{O}(\Nc\log(\Nc))$ time \citep{BeckerFadiliOchs2018arXiv180108691}; while this is an advantage, zeroSR1 does not benefit from significant curvature information because it has no memory. L-BFGS-B \citep{ByrdLuNocedalEtAl1995SIAMJSciComput} uses a similar algorithmic design to our \MPQN algorithm \citep{LeeSunSaunders2014SIAMJOptim}, but L-BFGS-B does not evaluate the weighted proximal operator in the same way. It solves this subproblem with an active set method, while we solve a special dual problem with a semi-smooth algorithm. Also, our algorithm uses a fixed step size of 1 and an optimal over-relaxation parameter $\eta^*$. In contrast, L-BFGS-B performs a line search over only the step size.

The only second order method we compare to is Min(imum)-Map Newton \citep{Anitescu2006MathProgram}. Second order methods require solving a linear system at each iteration; this necessitates using many expensive MVPs with a Krylov solver. Min-Map Newton is included for completeness to demonstrate our focus on first order methods. \Cref{tab:comparisonMethods} summarizes the methods used for comparison.

\begin{table}[ht!]
    \centering
    \begin{tabular}{llp{3.5cm}p{1.6cm}p{1.6cm}}
        \toprule
        & Algorithm & Derivative information  & Step size & Over-relaxation Parameter \\
        \midrule
        \multirow{5}{*}{Baseline Methods} &  BB-PGD & First Order  & $\tau_\mathrm{bb}$ & 1\\
        & A-PGD & First Order & $1/L(\A)$ & 1\\
        & zeroSR1 & First Order & 1 & 1\\
        & L-BFGS-B & First Order & line search & 1\\
        & Min-Map Newton & Second Order & 1 & NA\\
        \midrule
        \multirow{2}{*}{Proposed Methods} & \MPQN & First Order & 1 & $\eta^*$\\
       & \BPQN & First Order & 1 & $\eta^*$\\
        \bottomrule
    \end{tabular}
    \caption{For each algorithm, we specify the derivative information necessary, the step size, and over-relaxation parameter where applicable. Notably, our algorithms do not utilize second order information or a line search.}
    \label{tab:comparisonMethods}
\end{table}

\subsection{Offline Comparison}\label{sec:res-offline}

The dense representations of $\A$ facilitate a broader comparison of algorithms due to some methods being computationally infeasible in ``the loop". All methods compute one MVP before iterations begin to check if the convergence is met; thus, the number of MVPs is greater than the number of iterations plus one. When a second order method or a line search strategy is used, the number of MVPs can be even more. In particular, Min-Map Newton often converges in as few as two iterations, but it requires a considerable number of MVPs because of the Newton system solve. \Cref{tab:offline} summarize the results from these simulations.
\begin{table}[ht!]
    \centering
    \resizebox{\textwidth}{!}{
        \begin{tabular}{llccccc}
\toprule
 & & Minimum & Median & Mean & Maximum \\
\midrule
% $\text{\emph{Baseline Methods}}$ &  &  &  &  &  &  \\
\multirow{5}{*}{Baseline Methods} & BB-PGD & 11 & 11 & 11 & 12 \\
& A-PGD & 12 & 15 & 15 & 16 \\
& zeroSR1 & 9 & 13 & 13 & 15 \\
& L-BFGS-B & 10 & 11 & 11 & 12 \\
& Min-Map Newton & 15 & 28 & 27 & 47 \\
\midrule
\multirow{4}{*}{Proposed Methods} & Mono-PQN & 7 & 8 & 7.8 & 8 \\
& B-PQN : $p=3, \epsilon_\mathrm{gmres}=10^{-6}$ & 4.7 (4) & 5.4 (4) & 5.5 (4.4) & 7.1 (5) \\
& \textbf{B-PQN : $\mathbf{p=4, \epsilon_\mathrm{gmres}=10^{-6}}$} & \textbf{3.8 (3)} & \textbf{4.6 (3)} & \textbf{4.6 (3)} & \textbf{5.5 (4)} \\
& B-PQN : $p=6, \epsilon_\mathrm{gmres}=10^{-6}$ & 3.8 (2) & 5.1 (2) & 5.4 (2.1) & 8.4 (3) \\
\bottomrule
\end{tabular}

    }
    \caption{Each method is given the same set of 50 LCPs. The number of effective MVPs necessary to satisfy a stationarity condition is recorded. We include the number of high-fidelity MVPs for the \BPQN in parentheses to demonstrate that convergence is satisfied in as few as one to four iterations.}
    \label{tab:offline}
\end{table}

The well known first order methods BB-PGD and A-PGD both out perform Min-Map Newton as expected.  All proximal quasi-Newton methods are competitive, but our \MPQN method outperforms all other algorithms, leading to an $\approx 1.5\times$ speed up compared to BB-PGD. Notably, L-BFGS-B requires more MVPs than our \MPQN; even though L-BFGS-B optimizes a similar local quadratic model, our
\MPQN outperforms because of design choices made to specialize for \cref{eq:qp}. Our \MPQN outperforms zeroSR1 primarily due to having a richer curvature information. \BPQN substantially outperforms all methods. Its success stems from leveraging a highly informative local quadratic model with an algorithmic design again tailored to \cref{eq:qp}. 

\subsection{Online Comparison} \label{sec:res-online}
The baseline algorithm of BB-PGD is compared to our \MPQN and \BPQN algorithms. Each algorithm is run on increasingly larger square lattices: $(3\times3\times3, \Np = 27), (4\times4\times4,\Np = 64),(5\times5\times5, \Np = 125)$ and $(6\times6\times6, \Np = 216)$. In \cref{fig:online}, the number of E-MVPs for each algorithm is plotted against the size of the lattice. Both of our algorithms' performances dominate the baseline of BB-PGD. During online testing, \BPQN utilizes \((p = 4, \epsilon_{\mathrm{gmres}} = 10^{-8})\) for the low fidelity \(\hat{\A}\).

\begin{figure}[ht!]
    \centering 
    \includegraphics[width=\textwidth]{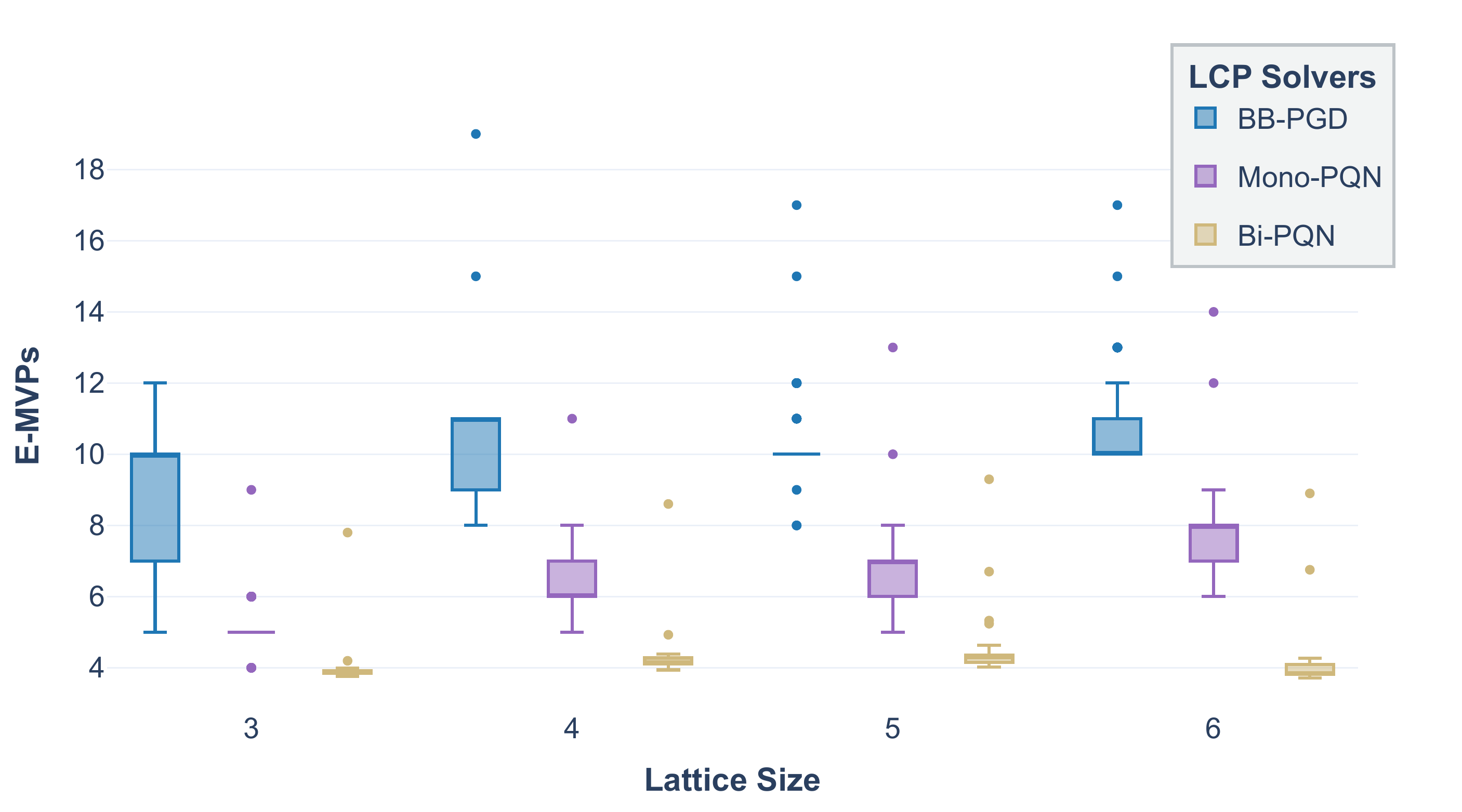}
    \caption{For each online experiment defined by lattice size on the x-axis, we compare box plots of each candidate algorithm's E-MVP statistics across $200$ simulation timesteps. \MPQN and \BPQN consistently outperform PGD, with \BPQN showing nearly problem size independent scaling.} 
    \label{fig:online}
\end{figure}

\begin{table}[ht!] 
    \centering
    \begin{tabular}{lcccc}%
\toprule%
& \multicolumn{4}{c}{Lattice size} \\
\cmidrule(l){2-5}
&3&4&5&6\\%
\midrule%
\textbf{BB{-}PGD}&&&&\\%
\hspace{.25cm} Total Run Time&15h 21m&1d 3h 54m&2d 18h 36m&7d 18h 0m\\%
\hspace{.25cm} Collision Resolution&13h 31m&23h 26m&2d 0h 43m&5d 0h 25m\\%
\textbf{Mono{-}PQN}&&&&\\%
\hspace{.25cm} Total Run Time&$1.59\times$&$1.37\times$&$1.32\times$&$1.16\times$\\%
\hspace{.25cm} Collision Resolution&$1.70\times$&$1.49\times$&$1.43\times$&$1.32\times$\\%
\textbf{Bi{-}PQN}&&&&\\%
\hspace{.25cm} Total Run Time&$1.85\times$&$1.75\times$&$1.60\times$&$1.57\times$\\%
\hspace{.25cm} Collision Resolution&$2.07\times$&$2.10\times$&$2.10\times$&$2.47\times$\\%
\bottomrule%
\end{tabular}
    \caption{The total run time and the time spent during collision resolution is recorded for the baseline algorithm (top rows) and the respective speed-ups are recorded for our proposed algorithms. Notably our algorithms show a speed up of $\approx 1.5 \times $ for \MPQN and $> 2\times$ for \BPQN.} \label{tab:online}
\end{table}

In \cref{tab:online}, one can see that \MPQN outperforms BB-PGD for all problem sizes, but relative speed up of the collision resolution stage decreases as problem sizes get larger. In contrast, \BPQN demonstrates a more dramatic $> 2\times$ speed up. Most importantly, \BPQN's computational advantage is maintained as the problem size increases; notice that in \cref{fig:online} the distribution of E-MVPs for \BPQN appears to be practically constant as a function of problem size, and has considerably less variance as compared to competing methods. This makes \BPQN's performance particularly promising, as it suggests that \BPQN will scale better for larger, more challenging problems. We hypothesize the observed performance suggests \BPQN steps are similar enough to true proximal Newton steps that we see the small iteration counts typical of second order methods without their associated computational cost.

\section{Conclusions and Future Work}\label{sec:disc}

This work was motivated by the identification of collision resolution as a computational bottleneck in large-scale dense suspension simulations; in particular, the number of MVPs used during optimization. Prior work by \citep{YanCoronaMalhotraEtAl2020JournalofComputationalPhysics} identified BB-PGD, a line search free method, to be the de facto method for large scale collision resolution. Motivated by this finding, we designed two custom proximal quasi-Newton methods. Notably, our \MPQN with an optimal over-relaxation parameter and an efficient weighted proximal operator solver shows a \(\approx 1.5 \times\) speed up of collision resolution over BB-PGD. In addition, our \BPQN leverages a low fidelity Hessian approximation and cleverly tailored implementation of \MPQN to obtain a \(>2\times\) speed up over the baseline. Both algorithms provide practically significant accelerations to online simulation of particulate systems of interest. In particular, \BPQN consistently converges in three high fidelity gradient evaluations across problem sizes and setups considered in this work, which is arguably near optimal and so, further acceleration seems unlikely. 

In PDE-based methods such as the BIM approach used in this work, every MVP of \(\A\) (and \(\hat{\A}\)) involves a GMRES linear solve. As iterative optimization methods utilize repeated MVPs, this setting is particularly suited to an application of GMRES subspace recycling. We aim to incorporate these in ongoing work, and test their contribution to the methods performance. 

There are a number of potential algorithmic developments and generalizations of interest for the multifidelity framework in our \BPQN method. Beyond its adaptation to various formulations of the mobility LCP and associated options for multifidelity, a general avenue of algorithmic extension could involve recursive or hierarchical use of fidelity to generalize the \BPQN: in our approach, the low fidelity subproblem is itself a non-negativity constrained quadratic program in which the quadratic term is the result of a mobility solve. Thus, the low fidelity problem itself is amenable to \BPQN. 

Finally, extending the work presented to a broader range of large-scale simulation problems is a natural avenue of future research. While this work only considered spherical particles with a no-slip boundary conditions, our framework could be adapted to handle non-spherical (e.g. spheroidal, convex, poly-convex) particle geometries, confinement and different boundary conditions. As the proposed optimization methods are developed to address computational bottlenecks due to collision resolution, cases of practical interest leading to challenging optimization problems (i.e., poor conditioning of \(\A\)) would be of particular interest. Our methods could be extended to frictional contact problems involving more general, non-linear complementarity problems (e.g. conic constraints in the dry granular case \cite{TASORA2011439,DeCoronaJayakumarEtAl20192019IEEEHighPerformExtremeComputConfHPEC}). As discussed in \cite{Brown2013Physics}, while tangential contact field modeling is a subject of substantial debate in the literature, there is evidence to suggest frictional particle collisions play an important role in abrupt material state transitions such as discontinuous shear thickening.

\section{Acknowledgements}
This material is based upon work supported by the National Science Foundation under Award No. 2309661 and the Department of Energy, Office of Science, Advanced Scientific
Computing Research under Award Number DE-SC0023346.

This work utilized the Alpine high performance computing resource at the University of Colorado Boulder. Alpine is jointly funded by the University of Colorado Boulder, the University of Colorado Anschutz, Colorado State University, and the National Science Foundation (award 2201538) \citep{universityofcoloradoboulderresearchcomputingAlpine2023}.
This work utilized the Blanca condo computing resource at the University of Colorado Boulder. Blanca is jointly funded by computing users and the University of Colorado Boulder \citep{universityofcoloradoboulderresearchcomputingBlancaCondoCluster2021}.

\bibliographystyle{amsalpha}
\bibliography{references}

\appendix 
\section{Fast Solutions to Weighted Proximal Operator} \label{app:solveProx}

The work \citep{BeckerFadiliOchs2018arXiv180108691} gives a complete treatment of this subject, but for completeness and clarity, we specialize their results to the case of a strongly convex quadratic program with non-negativity constraints.

\subsection{Discussion of the Special Dual Problem}\label{app:solveProx-deriv}

For ease of presentation, the superscripts referring to the iteration $k$ are dropped, and the weighted proximal operator is evaluated at an arbitrary vector $\z\in \mathbb{R}^n$. The current approximation of the Hessian is 
\[\B = \D +\U\U^\top - \V\V^\top\]
where $\D\succ 0$ is diagonal and $\U, \V \in \mathbb{R}^{n\times r}$ for some $r\leq m$.

For full proofs of the results necessary to obtain Proposition~\ref{prop-nnProx} we refer the reader to Appendix A of \citep{BeckerFadiliOchs2018arXiv180108691}. Here, we highlight that two different notions of duality, common convex analysis identities and the Fr\'echet differentiability of the Moreau envelope are utilized in order to obtain their result; see the textbook \citep{BauschkeCombettes2017} for background on these concepts. First, Fenchel duality is used to solve the weighted proximal operator $\prox_{\iota_C}^{\D +\U\U^\top}$ under the metric induced by the sum of a diagonal matrix $\D$ and the low rank update $\U\U^\top$. Then, Toland duality leverages the full convexity of the difference of two convex functions $\tilde{h}(\z) = \z^\top (\D +\U\U^\top)\z$ and $\tilde{g}(\z) = \z^\top (\V\V^\top)\z$. The formal statements of these dual problems are stated below in Lemmas \ref{lemma:fenchel} and \ref{lemma:toland} for the readers convenience\footnote{The notation $g^*$ denotes the Fenchel-Legendre conjugate of $g$, and $\partial g$ denotes the subdifferential.}. The functions are allowed to take on values in $\mathbb{R}\cup\{+\infty\}$ in order to encode the domain, so that $\dom f = \{ \z \mid f(\z) < \infty \}$.
\begin{lemma}[Fenchel duality (Proposition 15.13 of \citep{BauschkeCombettes2017})] \label{lemma:fenchel} 
    Let $h$ and $g$ be convex and lower semicontinuous. Suppose that $0 \in \operatorname{ri}(\operatorname{dom} g-\operatorname{dom} h)$\footnote{$\operatorname{ri}(\cdot)$ defines the relative interior of a set.}. Then
    \[\inf _{\z } h(\z)+g(\z)=-\min _{\w} h^*(-\w)+g^*(\w),\]
    with the extremality relationships between $\z^{\star}$ and $\w^{\star}$, respectively the solutions of the primal and dual problems
    \[\z^{\star} \in \partial h^*\left(-\w^{\star}\right) \text{ and } \quad \w^{\star} \in \partial g\left(\z^{\star}\right).\]
\end{lemma}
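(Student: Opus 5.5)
We would prove the lemma in three stages: weak duality, then strong duality with dual attainment (the only place the qualification condition enters), and finally the extremality relations from the equality case of the Fenchel--Young inequality. Throughout, $h$ and $g$ are proper, convex and lower semicontinuous, so $h^{**}=h$ and $g^{**}=g$, and $\partial h^*=(\partial h)^{-1}$.

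\emph{Weak duality.} For any $\z$ and $\w$, the Fenchel--Young inequality gives $h(\z)+h^*(-\w)\ge \langle -\w,\z\rangle$ and $g(\z)+g^*(\w)\ge\langle \w,\z\rangle$. Adding these yields $h(\z)+g(\z)\ge -\bigl(h^*(-\w)+g^*(\w)\bigr)$. Hence $p\defeq\inf_{\z} h+g$ satisfies $p\ge d\defeq\sup_{\w}-\bigl(h^*(-\w)+g^*(\w)\bigr)$.

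\emph{Strong duality and attainment.} If $p=-\infty$, weak duality forces $h^*(-\w)+g^*(\w)=+\infty$ for every $\w$, so the identity holds trivially. Otherwise $p$ is finite, because the qualification condition implies $\dom h\cap\dom g\neq\emptyset$. We introduce the perturbation (value) function
\[
\varphi(\vec{y})=\inf_{\z}\; h(\z)+g(\z+\vec{y}).
\]
It is convex, as the marginal of a jointly convex function, and $\dom\varphi=\dom g-\dom h$. A direct computation gives $\varphi^*(\w)=h^*(-\w)+g^*(\w)$. The condition $0\in\operatorname{ri}(\dom g-\dom h)$ states exactly that $0\in\operatorname{ri}\dom\varphi$. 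Since $\varphi(0)=p$ is finite, $\varphi$ is proper on the affine hull of its domain; a convex function that is finite at a relative interior point of its domain is subdifferentiable there. This gives a $\w^\star\in\partial\varphi(0)$, i.e. $\varphi(0)+\varphi^*(\w^\star)=0$, which reads $p=-\bigl(h^*(-\w^\star)+g^*(\w^\star)\bigr)$. Combined with weak duality, this proves the equality and shows that the dual minimum is attained at $\w^\star$.

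The main obstacle is exactly this step: checking that $\varphi$ is convex with the stated domain and conjugate, and invoking nonemptiness of the subdifferential at relative interior points. The latter needs some care because $\varphi$ may be $-\infty$ elsewhere. We would first show $\varphi>-\infty$ on $\operatorname{ri}\dom\varphi$, using that $\varphi(0)$ is finite and that a convex function taking the value $-\infty$ at one relative interior point is $-\infty$ on the whole relative interior. Then we apply the standard existence result for the subdifferential (Rockafellar, Thm.~23.4).

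\emph{Extremality.} Suppose $\z^\star$ is primal optimal and $\w^\star$ is dual optimal. Strong duality forces both Fenchel--Young inequalities above to be equalities at $(\z^\star,\w^\star)$, since their sum is an equality. Equality in $g(\z^\star)+g^*(\w^\star)=\langle\w^\star,\z^\star\rangle$ is equivalent to $\w^\star\in\partial g(\z^\star)$. Equality in $h(\z^\star)+h^*(-\w^\star)=\langle-\w^\star,\z^\star\rangle$ is equivalent to $-\w^\star\in\partial h(\z^\star)$, i.e. $\z^\star\in\partial h^*(-\w^\star)$. The same argument runs in reverse: any pair satisfying both relations has zero duality gap, so both points are optimal.
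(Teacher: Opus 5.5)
Your proof is correct, but it takes a different route from the one the paper relies on. The paper gives no argument of its own. It cites Bauschke--Combettes, Proposition~15.13, which in that source is a direct consequence of the Attouch--Br\'ezis theorem: under the qualification condition, $(h+g)^*$ equals the infimal convolution of $h^*$ and $g^*$, and that convolution is exact. Hence $\inf(h+g)=-(h+g)^*(\vec{0})=-\min_{\w}\, h^*(-\w)+g^*(\w)$.

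You work on the primal side instead. Your value function is $\varphi(\vec{y})=\inf_{\vec{u}}\, h(-\vec{u})+g(\vec{y}-\vec{u})$, the infimal convolution of $g$ with the reflection of $h$. Its conjugate is the dual objective by the always-valid rule that conjugation turns infimal convolution into addition. Dual attainment then comes from subdifferentiability of $\varphi$ at $\vec{0}\in\operatorname{ri}\dom\varphi$. The two routes are conjugate descriptions of the same fact.

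What each buys:
\begin{itemize}
\item Your route is elementary and self-contained in $\mathbb{R}^{\Nc}$, which is all the paper needs. It uses only a relative-interior subdifferentiability theorem, and it produces the extremality relations explicitly from the Fenchel--Young equality cases, which the citation leaves implicit.
\item The cited route survives in a general Hilbert space with the strong relative interior. There, a convex function that is not lower semicontinuous can lack subgradients even at interior points of its domain, and your marginal $\varphi$ need not be lower semicontinuous. Attouch--Br\'ezis instead exploits the lower semicontinuity of $h$ and $g$ themselves.
\end{itemize}

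One step should be tidied. Once $\varphi(\vec{0})=p$ is known to be finite, your worry that $\varphi$ may be $-\infty$ elsewhere is unfounded. Moreover, showing only $\varphi>-\infty$ on $\operatorname{ri}\dom\varphi$ would not license Rockafellar's Theorem~23.4, which requires $\varphi$ to be proper everywhere. The fact you want is Rockafellar's Theorem~7.2: an improper convex function equals $-\infty$ at every point of the relative interior of its domain. Since $\varphi(\vec{0})$ is finite and $\vec{0}\in\operatorname{ri}\dom\varphi$, $\varphi$ is proper, and Theorem~23.4 applies directly.
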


\begin{lemma} [Toland duality] \label{lemma:toland}
    Let $\tilde{h}$ and $\tilde{g}$ be convex and lower semicontinuous. Then
    \[\inf _{\tilde{\z}\in \dom \tilde{h}} \tilde{h}(\tilde{\z})-\tilde{g}(\tilde{\z})=\inf _{\tilde{\w}\in\dom \tilde{g}^* } \tilde{g}^*(\tilde{\w})-\tilde{h}^*(\tilde{\w}) .\]
    If $\tilde{h}-\tilde{g}$ is coercive, and $\tilde{\w}^{\star}$ solves the dual problem in $\tilde{\w}$, then there exists a solution $\tilde{\z}^{\star}$ of the primal problem and
    \[\begin{array}{ll}
    \tilde{\z}^{\star} \in \partial \tilde{h}^*\left(\tilde{\w}^{\star}\right) & \text { and } \quad \tilde{\w}^{\star} \in \partial \tilde{g}\left(\tilde{\z}^{\star}\right), \\
    \tilde{\w}^{\star} \in \partial \tilde{h}\left(\tilde{\z}^{\star}\right) & \text { and } \quad \tilde{\z}^{\star} \in \partial \tilde{g}^*\left(\tilde{\w}^{\star}\right) .
    \end{array}\]
\end{lemma}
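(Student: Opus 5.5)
The plan is to derive both parts of \Cref{lemma:toland} from the Fenchel--Moreau biconjugation theorem and the Fenchel--Young inequality. Throughout I assume $\tilde h,\tilde g$ are also proper, as is implicit in the statement. The key steps, in order, are: rewrite $\tilde g$ as its biconjugate, exchange the two infima, then use coercivity to produce a primal point attached to the dual solution.

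For the value identity, since $\tilde g$ is proper, convex and lower semicontinuous, Fenchel--Moreau gives
\[
\tilde g(\tilde\z)=\tilde g^{**}(\tilde\z)=\sup_{\tilde\w}\,\langle \tilde\w,\tilde\z\rangle-\tilde g^*(\tilde\w),
\]
where the supremum may be restricted to $\tilde\w\in\dom\tilde g^*$. Then for every $\tilde\z\in\dom\tilde h$,
\[
\tilde h(\tilde\z)-\tilde g(\tilde\z)=\inf_{\tilde\w\in\dom\tilde g^*}\Bigl[\tilde g^*(\tilde\w)+\tilde h(\tilde\z)-\langle\tilde\w,\tilde\z\rangle\Bigr].
\]
Taking $\inf_{\tilde\z\in\dom\tilde h}$ and interchanging the two infima (always legitimate), the inner infimum over $\tilde\z$ is $-\tilde h^*(\tilde\w)$. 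This gives $\inf_{\tilde\w\in\dom\tilde g^*}\tilde g^*(\tilde\w)-\tilde h^*(\tilde\w)$. Restricting to $\dom\tilde h$ and $\dom\tilde g^*$ is exactly what avoids the indeterminate form $\infty-\infty$. I would state this bookkeeping explicitly, including the degenerate case where either side is $-\infty$.

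For the extremality relations, let $\tilde\w^\star$ solve the dual problem, so $\tilde g^*(\tilde\w^\star)<\infty$. By Fenchel--Young, $\tilde g(\tilde\z)\ge\langle\tilde\w^\star,\tilde\z\rangle-\tilde g^*(\tilde\w^\star)$, and therefore
\[
\tilde h(\tilde\z)-\langle\tilde\w^\star,\tilde\z\rangle\;\ge\;\tilde h(\tilde\z)-\tilde g(\tilde\z)-\tilde g^*(\tilde\w^\star).
\]
Since $\tilde h-\tilde g$ is coercive, the convex lower semicontinuous function $\tilde h-\langle\tilde\w^\star,\cdot\rangle$ is coercive too, so it attains its minimum at some $\tilde\z^\star$. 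Optimality of $\tilde\z^\star$ means $\tilde\w^\star\in\partial\tilde h(\tilde\z^\star)$. Equivalently, $\tilde h(\tilde\z^\star)+\tilde h^*(\tilde\w^\star)=\langle\tilde\w^\star,\tilde\z^\star\rangle$, hence $\tilde\z^\star\in\partial\tilde h^*(\tilde\w^\star)$.

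Next I combine this equality with Fenchel--Young for $\tilde g$:
\[
\tilde h(\tilde\z^\star)-\tilde g(\tilde\z^\star)\le \bigl(\langle\tilde\w^\star,\tilde\z^\star\rangle-\tilde h^*(\tilde\w^\star)\bigr)-\bigl(\langle\tilde\w^\star,\tilde\z^\star\rangle-\tilde g^*(\tilde\w^\star)\bigr)=\tilde g^*(\tilde\w^\star)-\tilde h^*(\tilde\w^\star).
\]
By the value identity, the right-hand side is the primal infimum. So $\tilde\z^\star$ is a primal solution, and the Fenchel--Young inequality for $\tilde g$ must hold with equality. That equality is precisely $\tilde\w^\star\in\partial\tilde g(\tilde\z^\star)$, equivalently $\tilde\z^\star\in\partial\tilde g^*(\tilde\w^\star)$, which gives all four relations.

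The main obstacle I expect is the attainment step. The difference $\tilde h-\tilde g$ need not be lower semicontinuous, so one cannot minimize it directly. That is why I transfer coercivity to the convex function $\tilde h-\langle\tilde\w^\star,\cdot\rangle$ via Fenchel--Young. I also need to check that the finiteness of $\tilde g^*(\tilde\w^\star)$ and of $\tilde h^*(\tilde\w^\star)$ makes every subtraction above well defined. Finiteness of $\tilde h^*(\tilde\w^\star)$ follows from the attained minimum.
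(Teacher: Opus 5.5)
Your proof is correct. It takes a different route from the paper only in the sense that the paper gives no argument of its own. For the value identity it cites Corollary 14.20 of Bauschke--Combettes or Theorem 2.2 of Toland. For the extremality relations it says they follow by combining Toland's Theorems 2.7 and 2.8. You instead derive everything directly from Fenchel--Moreau (applied to $\tilde g$) and the Fenchel--Young inequality, which spells out the content the paper leaves to those references.

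Your route makes the hypotheses transparent:
\begin{itemize}
\item It exposes the properness assumption the statement leaves implicit. This is needed for $\tilde g=\tilde g^{**}$ and for $\tilde h^*>-\infty$.
\item It shows that coercivity of $\tilde h-\tilde g$ is used only to make $\tilde h-\langle\tilde{\w}^{\star},\cdot\rangle$ attain its infimum, i.e.\ to make $\partial\tilde h^*(\tilde{\w}^{\star})$ nonempty. In particular this rules out a dual ``solution'' with $\tilde h^*(\tilde{\w}^{\star})=+\infty$, which you correctly handle.
\item Your argument works for \emph{any} minimizer of $\tilde h-\langle\tilde{\w}^{\star},\cdot\rangle$, i.e.\ for every $\tilde{\z}^{\star}\in\partial\tilde h^*(\tilde{\w}^{\star})$. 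So the conclusion is slightly stronger than existence.
\item It shows that finite dimensionality enters only through Weierstrass on closed bounded sublevel sets. This is harmless in $\mathbb{R}^{\Nc}$.
\end{itemize}

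The citation route is shorter and inherits whatever generality the cited theorems have. Its drawback is that the reader must check that their hypotheses, notably properness and how coercivity is used, match the statement as written.
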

The first statement follows from either \citep[Corollary 14.20]{BauschkeCombettes2017} or Theorem 2.2 of Toland's original paper \citep{Toland1979ArchRationalMechAnal}, and the second set of statements follow by combining Theorems 2.7 and 2.8 in \citep{Toland1979ArchRationalMechAnal}.

Notably, Toland duality is in general a vacuous statement because the difference of the functions $\tilde{h}$ and $\tilde{g}$ may not have a global minimum. In the particular case, where $\tilde{h}-\tilde{g}$ is known to be convex \emph{a priori}, then this is a powerful tool. For BFGS updates, $\tilde{h}-\tilde{g}$ corresponds to the norm induced by hessian approximation $\B$ which is guaranteed to be a convex function by construction. This gives Toland duality the conditions it requires to provide a minimizer. 

The key benefit of the dual formulations is that the dual problem minimization occurs over a subspace that can be parameterized by the columns of $\U$ and $\V$. This is what decreases the size of the dual problem leading to the $2r$ dimensional root finding problem as opposed to the $\Nc$ dimensional primal problem.

\subsection{Numerical Approximation via Semi-Smooth Newton} \label{app:solveProx-ssnewton}
Recall the piecewise linear function defined in \cref{eq:prop1-rootFinding}
\begin{align*}
    \mathcal{L}\left(\alphavec,\tilde{\alphavec}\right) 
    =\begin{bmatrix}
        \U^\top \Bigl[\tilde{\x} + \C^{-1}\V\tilde{\alphavec} - \bigl(\tilde{\x} + \C^{-1}\V\tilde{\alphavec} - \D^{-1}\U\alphavec\bigr)_+\Bigr] \\
        \V^\top \Bigl[\tilde{\x} - \bigl(\tilde{\x} + \C^{-1}\V\tilde{\alphavec} - \D^{-1}\U\alphavec\bigr)_+\Bigr]
    \end{bmatrix}
\end{align*}
where $\C = \D + \U\U^\top$. We can define the convenience function
\[\boldsymbol{\Lambda}_{ii}\left(\alphavec,\tilde{\alphavec}\right) = \begin{cases}
    1,& \Bigl(\bigl(\tilde{\x} + (\D + \U\U^\top)^{-1}\V\tilde{\alphavec} - \D^{-1}\U\alphavec\bigr)_+\Bigr)_{i} > 0\\
    0,& \text{otherwise}
\end{cases}\]
which allows us to efficiently define the Jacobian of $\mathcal{L}(\alphavec)$ almost everywhere
\begin{align*}
    J_\mathcal{L} (\alphavec, \tilde{\alphavec}) &= \begin{bmatrix} 
        \U^{\top}\boldsymbol{\Lambda}\left(\alphavec,\tilde{\alphavec}\right)  \D^{-1} \U & \U^{\top} \left[\mat{I}_n - \boldsymbol{\Lambda}\left( \alphavec, \tilde{\alphavec}\right)\right] \C^{-1} \V\\
        \V^{\top} \boldsymbol{\Lambda}\left(\alphavec,\tilde{\alphavec}\right)  \D^{-1} \U & -\V^{\top}\boldsymbol{\Lambda}\left(\alphavec,\tilde{\alphavec}\right) \C^{-1} \V
    \end{bmatrix} + \mat{I}.
\end{align*} 
This root finding problems falls under the class of semi-smooth functions \citep{Mifflin1977SIAMJControlOptim}, and \cref{alg:semiSmoothNewton} can approximate the solution with local super-linear convergence \citep{QiSun1993MathematicalProgramming, Hintermuller} which we observe in practice.
\begin{algorithm}
\caption{SemiSmooth Newton} \label{alg:semiSmoothNewton}
\begin{algorithmic}
\Procedure{SemiSmoothNewton}{$\D, \U, \V, \tilde{\x}; j_{\max}, \epsilon_{\mathrm{abs}}$}
\Comment{$\D\in\mathbb{R}^{\Nc\times \Nc}$, $\U,\V\in\mathbb{R}^{\Nc\times r}$, $\tilde{\x}\in\mathbb{R}^\Nc$}
    \State $[\alphavec_0\; \tilde{\alphavec}_0]^\top \gets \vec{0}$
    \Comment{$\alphavec_j, \tilde{\alphavec}_j\in\mathbb{R}^{1\times r}$}
    \For{$j \gets 0:j_{\max}-1$}
        \State $[\alphavec_{j+1}\; \tilde{\alphavec}_{j+1}]^\top \gets [\alphavec_j\; \tilde{\alphavec}_j]^\top - J_{\mathcal{L}}^{-1}\bigl([\alphavec_j\; \tilde{\alphavec}_j]^\top\bigr) \mathcal{L}\bigl([\alphavec_j \; \tilde{\alphavec}_j]^\top\bigr)$
        \If{\(\left\|[\alphavec_{j+1}\; \tilde{\alphavec}_{j+1}] - [\alphavec_{j}\; \tilde{\alphavec}_{j}]\right\| < \epsilon_{\mathrm{abs}}\)}
            \State break
        \EndIf
    \EndFor
    \State \Return $\hat{\x} = \left(\tilde{\x} + \begin{bmatrix} 
        - \D^{-1} \U & \C^{-1} \V 
    \end{bmatrix} 
    \left[ \alphavec_j \tilde{\alphavec}_{j+1}\right]^\top\right)_+ $
\EndProcedure
\end{algorithmic}
\end{algorithm}

\section{Step Size Selection} \label{app:stepSize}
To prove \cref{lemma:convergence}, we need \cref{th:convergenceLee} from \citep{LeeSunSaunders2014SIAMJOptim}. We state a version that includes assumptions found elsewhere in the paper and somewhat specialized to this work.
\begin{theorem}[Theorem 3.1 \cite{LeeSunSaunders2014SIAMJOptim}]
\label{th:convergenceLee}
Suppose $f$ is a \(L\)-strongly smooth convex function, \(g\) is a lower semi-continuous convex function, and $\inf_{\x}\{f(\x) + g(\x)|\x \in \mathrm{dom}\ f + g\}$ is attained. If, in a proximal Newton-type method, $\B^{(k)} \succeq \mu \mat{I}$ for some $\mu > 0$, \(\tau^{(k)} = 1\), \(\eta^{(k)} > \eta_{\text{min}} > 0\), the iterates satisfy a \emph{sufficient decrease condition} 
\begin{equation}
    f(\x^{(k + 1)}) + g(\x^{(k + 1)}) \leq f(\x^{(k)}) + g(\x^{(k)}) + \alpha \eta^{(k)} \lambda
\end{equation}
where \(\lambda = \nabla f(\x^{(k)})^\top \p^{(k)} + g(\x^{(k)} + \p^{(k)}) - g(\x^{(k)})\), \(\alpha \in (0, \frac{1}{2})\), and the subproblems are solved exactly, then $\x^{(k)}$ from said proximal Newton-type method converges to an optimal solution starting at any $\x^{(0)} \in \mathrm{dom}\ f + g$. 
\end{theorem}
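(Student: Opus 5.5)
The plan is to follow the standard descent argument for proximal Newton-type methods, in the spirit of \citep{LeeSunSaunders2014SIAMJOptim}. Write $F = f + g$ and $F^\star = \inf F$, which is attained by hypothesis. The first step is a \emph{descent property} of the exact search direction. Since $\p^{(k)}$ exactly minimizes the model
\[
\nabla f(\xk)^\top \vec{d} + \tfrac12 \vec{d}^\top \B^{(k)} \vec{d} + g(\xk + \vec{d}),
\]
it can be compared with the points $\vec{d} = t\p^{(k)}$ for $t \in (0,1)$. Convexity of $g$ gives $g(\xk + t\p^{(k)}) \le (1-t)g(\xk) + t\,g(\xk + \p^{(k)})$. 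Substituting this, dividing by $1-t$ and letting $t \uparrow 1$ yields
\[
\lambda_k \le -\bigl(\p^{(k)}\bigr)^\top \B^{(k)} \p^{(k)} \le -\mu \bigl\|\p^{(k)}\bigr\|^2 .
\]
A useful consequence is that $\p^{(k)} = \vec{0}$ exactly when $\xk$ is optimal. This follows from the subproblem optimality condition $-\nabla f(\xk) - \B^{(k)}\p^{(k)} \in \partial g(\xk + \p^{(k)})$, which reduces to $-\nabla f(\xk) \in \partial g(\xk)$ when $\p^{(k)} = \vec{0}$.

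The second step telescopes the sufficient decrease condition. Combining it with the bound above and with $\eta^{(k)} > \eta_{\min}$ gives
\[
F(\xk[k+1]) \le F(\xk) - \alpha\, \eta_{\min}\, \mu\, \bigl\|\p^{(k)}\bigr\|^2 .
\]
Summing over $k$ and using $F \ge F^\star > -\infty$ shows that $\{F(\xk)\}$ is nonincreasing, that $\sum_k \|\p^{(k)}\|^2 < \infty$, and that $\sum_k |\lambda_k| < \infty$. In particular $\p^{(k)} \to \vec{0}$ and $\lambda_k \to 0$. All iterates stay in $\dom F$, since $\xk[k+1]$ is a convex combination of $\xk$ and $\xk + \p^{(k)}$, both of which lie in $\dom g$.

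The third step, which is the main obstacle, is to pass to the limit in the optimality condition. I would take a subsequence $\xk \to \bar{\x}$; such a subsequence exists by boundedness of the sublevel set $\{F \le F(\x^{(0)})\}$, which holds for instance when the solution set is bounded. Along it, $\nabla f(\xk) \to \nabla f(\bar{\x})$ by $L$-smoothness, and $\xk + \p^{(k)} \to \bar{\x}$. The difficulty is the term $\B^{(k)}\p^{(k)}$. From the first step we only know that $\|(\B^{(k)})^{1/2}\p^{(k)}\|^2 \le |\lambda_k| \to 0$, and turning this into $\B^{(k)}\p^{(k)} \to \vec{0}$ requires control of $\|\B^{(k)}\|$. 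I would try first to avoid that bound by working with the subgradient inequality, tested against an arbitrary $\y \in \dom g$:
\[
g(\y) \ge g(\xk + \p^{(k)}) - \nabla f(\xk)^\top\bigl(\y - \xk - \p^{(k)}\bigr) - \bigl(\p^{(k)}\bigr)^\top \B^{(k)} \bigl(\y - \xk - \p^{(k)}\bigr).
\]
Cauchy--Schwarz in the $\B^{(k)}$ inner product bounds the last term by $\|(\B^{(k)})^{1/2}\p^{(k)}\|\,\|(\B^{(k)})^{1/2}(\y - \xk - \p^{(k)})\|$, so only the second factor needs controlling. If that cannot be done in general, I would invoke the boundedness of the Hessian approximations, $\B^{(k)} \preceq M\mat{I}$, which is implicit in the ``proximal Newton-type method'' framework of \citep{LeeSunSaunders2014SIAMJOptim}.

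Once $\B^{(k)}\p^{(k)} \to \vec{0}$ is available, lower semicontinuity of $g$ lets us take limits in the inequality above and obtain $g(\y) \ge g(\bar{\x}) - \nabla f(\bar{\x})^\top(\y - \bar{\x})$ for all $\y$. This says $-\nabla f(\bar{\x}) \in \partial g(\bar{\x})$, so $\bar{\x}$ is optimal. Since $F(\xk)$ is monotone and lower semicontinuity gives $F(\bar{\x}) \le \lim_k F(\xk)$, we get $F(\xk) \to F^\star$, and every limit point of the iterates is an optimal solution. That is the sense in which the iterates converge, and it holds from any $\x^{(0)} \in \dom F$.
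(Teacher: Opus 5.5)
Your first two steps coincide with the standard argument. The paper gives no proof of this theorem; it defers to \citep{LeeSunSaunders2014SIAMJOptim}. That argument runs through your descent inequality $\lambda_k\le-(\p^{(k)})^\top\B^{(k)}\p^{(k)}\le-\mu\|\p^{(k)}\|^2$, the telescoped sufficient decrease, $\lambda_k\to0$ and $\p^{(k)}\to\vec{0}$. It then reads off convergence to a minimizer from the fact that $\p=\vec{0}$ characterizes optimality. You are right that this last passage is where the work lies, but what you prove is not the stated theorem. You add $\B^{(k)}\preceq M\mat{I}$ and bounded sublevel sets, and you conclude only that every limit point is optimal.

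The upper bound cannot be avoided by your Cauchy--Schwarz detour: the factor $\|(\B^{(k)})^{1/2}(\y-\xk-\p^{(k)})\|$ is uncontrolled exactly when $\B^{(k)}$ is. Whether or not it is implicit in the cited framework, it is absent from the statement here, and the statement is false without it. Take $n=1$, $f(x)=x^2/2$, $g=\iota_{[0,\infty)}$, $x^{(0)}=1$, $\B^{(k)}=2^{k+1}$, $\eta^{(k)}=1$.
\begin{itemize}
\item The subproblem is solved exactly by $p^{(k)}=-2^{-(k+1)}x^{(k)}$, and $\B^{(k)}\succeq 1$.
\item The sufficient decrease condition reduces to $\alpha\le 1-2^{-(k+2)}$, which holds for every $\alpha<\tfrac12$.
\item Yet $x^{(k)}=\prod_{j=1}^{k}(1-2^{-j})$ tends to a positive limit, while the minimizer is $0$.
\end{itemize}
So state $\B^{(k)}\preceq M\mat{I}$ as an explicit hypothesis. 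With it, your bound $\|\B^{(k)}\p^{(k)}\|\le\sqrt{M|\lambda_k|}\to0$ and the passage to the limit in the subgradient inequality are correct.

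Your conclusion still falls short of ``$\xk$ converges to an optimal solution.'' Attainment of the infimum does not give bounded sublevel sets. Also, ``every limit point is a minimizer'' yields convergence of the whole sequence only when the minimizer is unique. Otherwise you need an extra argument, e.g.\ quasi-Fej\'er monotonicity under controlled variation of the metrics $\B^{(k)}$, and your proposal does not supply one. In the paper's application this second issue is harmless. There $\A\succ0$ makes $f+\iota_C$ strongly convex, so sublevel sets are bounded and the minimizer $\x^\star$ is unique; your argument then does give $\xk\to\x^\star$.

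There are two smaller slips.
\begin{itemize}
\item Lower semicontinuity gives $F(\bar{\x})\le\lim_k F(\xk)$, which is the wrong direction for $F(\xk)\to F^\star$. Instead, put $\y=\bar{\x}$ in your subgradient inequality to get $\limsup g(\xk+\p^{(k)})\le g(\bar{\x})$ along the subsequence. Then transfer this to $g(\xk)$ via $g(\xk)-g(\xk+\p^{(k)})=\nabla f(\xk)^\top\p^{(k)}-\lambda_k\to0$.
\item $\xk[k+1]$ is a convex combination of $\xk$ and $\xk+\p^{(k)}$ only when $\eta^{(k)}\le1$. That is not assumed, and the paper's $\eta^*$ can exceed $1$. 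Membership $\xk[k+1]\in\dom F$ follows instead from the sufficient decrease inequality itself.
\end{itemize}
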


\stepSizeLemma*
\begin{proof}
Let \(f\) and \(g\) be as defined previously in \cref{sec:pqn}. For our objective \(f + g\), all the necessary regularity assumptions of \cref{th:convergenceLee} are met. The uniform strong convexity of the approximate Hessians and the minimum step size are satisfied by the assumption in \cref{lemma:convergence}. Thus, we only need to show that the sufficient decrease condition holds for the optimal step size computed by \cref{alg:stepSize}. As the optimal step size and \MPQN will only return feasible iterates, we can ignore the non-smooth terms of the sufficient decrease condition. Therefore, the sufficient decrease condition can be written as 
\begin{equation*}
f(\x^{(k + 1)}) \leq f(\x^{(k)}) + \alpha\eta\left((\nabla f(\x^{(k)})^\top\p^{(k)} \right)
\end{equation*}
which, for our $f$, becomes 
\begin{equation*}
\frac{1}{2}(\x^{(k)} + \eta \p^{(k)})^\top \A (\x^{(k)} + \eta \p^{(k)}) + \b^\top(\x^{(k)} + \eta\p^{(k)}) \leq \frac{1}{2}(\x^{(k)})^\top\A\x^{(k)} + \b^\top\x^{(k)} + \alpha\eta\left(\A\x^{(k)} + \b\right)^\top\p^{(k)}.
\end{equation*}
Simplifying yields an equivalent condition of
\begin{equation}
    \label{eq:suffDecSimp}
    \frac{1}{2}\eta(\p^{(k)})^\top\A\p^{(k)} \leq -(1 - \alpha)(\A\x^{(k)} + \b)^\top\p^{(k)}
\end{equation}
using the fact that \(\eta > 0\). The optimal step size computed for this problem will always be less than or equal to the unconstrained optimal step size 
\begin{equation*}
    \eta^{\star} \leq - \frac{(\A\x^{(k)} + \b)^\top\p^{(k)}}{(\p^{(k)})^\top\A\p^{(k)}}.
\end{equation*}
Plugging this into the left hand side of \cref{eq:suffDecSimp} we have 
\begin{align*}
    \frac{1}{2}\eta^\star(\p^{(k)})^\top\A\p^{(k)} &\leq -\frac{1}{2}(\A\x^{(k)}
 + \b)^\top\p^{(k)} \\
 &\leq -(1 - \alpha)(\A\x^{(k)}
 + \b)^\top\p^{(k)}
 \end{align*}
 as \(\p^{(k)}\) is a descent direction and \(\alpha \in (0, \frac{1}{2})\). Thus, the optimal step size satisfies the sufficient decrease condition; all necessary hypotheses of \cref{th:convergenceLee} are met, which implies that \cref{alg:pqn} is globally convergent.
\end{proof}

\section{Computing \texorpdfstring{\(c(\A)\)}{c(A)} and applying \texorpdfstring{\cref{lemma:cottleLemma}}{refcottleLemma}} \label{app:LipshitzConst}
\cottleLemma

 We see that as we increase \(\delta\), the Lipschitz constant grows. This suggests that matrices closer together will also have solutions closer together, as we expect. To see if we can apply the bound derived from \cref{lemma:cottleLemma}, this comes down to checking if \(\|\A - \A^\prime\|_{\infty} < c(\A)\). We now discuss how we can compute \(c(\A)\). Finding $c(\A)$ requires optimizing a nonconvex non-smooth objective. The non-convexity may not immediately obvious --- it comes from maximization occurring over quadratic forms that are not symmetric, 
\[h(\z) \defeq \max_{j \in \{1,\cdots,n\}} \z_j (\A\z)_j \neq \max_{j \in \{1,\cdots,n\}}  \bigl((\A^{1/2}\z)_j\bigr)^2.\]
In other words, if $\vec{e}_j$ is the $j^\text{th}$ unit vector, $h(\z) = \max_{j \in \{1,\cdots,n\}} \z^\top \left(\A\vec{e}_j\vec{e}_j^\top\right) \z$, and $\A\vec{e}_j\vec{e}_j^\top$ is not necessarily positive semidefinite.

Furthermore, the equality constraint is not amenable to standard optimization algorithms. This can be overcome by brute force:
\[\min_{\|\z\|_\infty = 1} h(\z) = \min_{\substack{i \in \{1,\cdots,n\} \\\sigma \in \{-1,1\}}} \min_{\substack{-1\leq \z \leq1 \\ z_i = \sigma} } h(\z). \]
Effectively, this approach loops over all possible $i$ and $\sigma$. For a fixed an index $i$ and $\sigma$, an equality constraint of $z_i = \sigma$ and inequality constraints $-1\leq\z\leq1$ are enforced. This guarantees that $\|\z\|_\infty =1$. Now local optimization can be performed on $h$. This is done with (semi-smooth) proximal gradient descent. The gradient is well defined almost everywhere
\[\nabla h(\z) = (\vec{e}_{j^*} \vec{e}_{j^*}^\top \A + \A^\top \vec{e}_{j^*} \vec{e}_{j^*}^\top) \z\]
where $j^* = \underset{j \in \{1,\cdots,n\}}{\operatorname{argmax}}\z_j (\A\z)_j$.

Cottle notes that $c(A)\in \bigl[ \tfrac{\lambda_{\mathrm{min}}(\A)}{n},\lambda_{\mathrm{min}}(\A)\bigr]$ \citep{CottlePangStone2009}. This motivates initializing with the projection of the eigenvector corresponding to the smallest eigenvalue
\[\z^{(0)}_{\sigma i} = \min\left(\max\left(\frac{\sigma \operatorname{sgn}\bigl((\v_{\mathrm{min}})_i\bigr) \v_{\mathrm{min}}}{\lvert (\v_{\mathrm{min}})_i\rvert},-1\right),1\right).\]
A backtracking line search was used to guarantee sufficient decrease at every iteration. 

This procedure does not guarantee that the bound is found exactly, but the bound found computed through this procedure is often tighter than $\lambda_{\mathrm{min}}(\A)$. \Cref{fig:app-cottle} shows the percentage of low fidelity matrices parameterized by \((p, \epsilon_{\mathrm{gmres}})\) where we are guaranteed that \(\hat{\A}\) is in a locally Lipschitz interval with respect to the LCP solution map.
\begin{figure}
    \centering
    \includegraphics[width=0.5\textwidth]{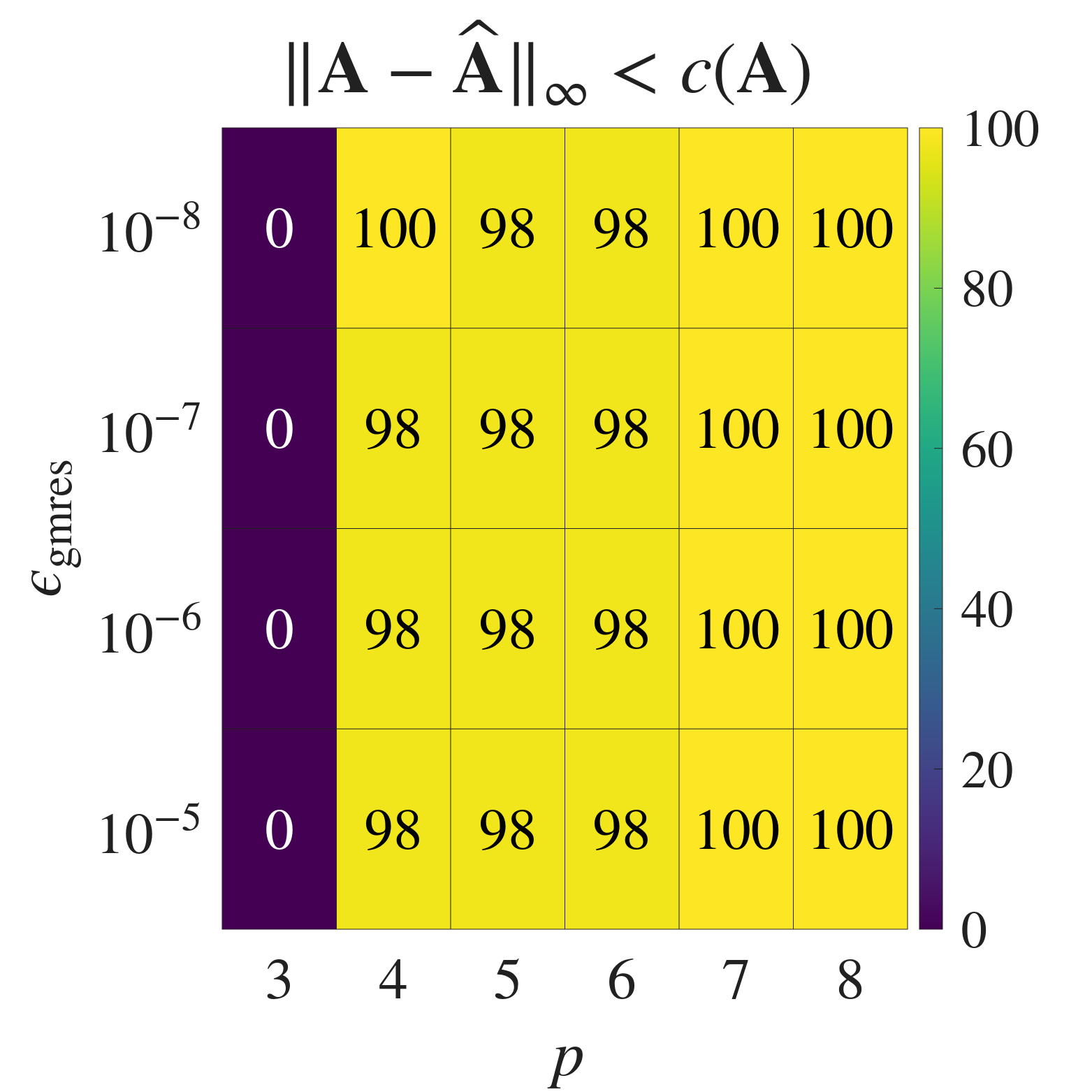}
    \caption{The percentage of the example LCPs generated in \cref{sec:res-offline} such that \(\left\|\A^\prime-\A^{\prime\prime}\right\|_{\infty} < c(\A)\) is true, as a function of spherical harmonic order \(p\) and GMRES tolernace $\epsilon_\text{gmres}$. Low fidelity matrices that satisfy this criterion have a locally Lipschitz solution map as given by \cref{lemma:cottleLemma}.}
    \label{fig:app-cottle}
\end{figure}
We see that this holds for nearly all examples except for \(p = 3\). While failing to meet this bound does not necessarily mean that the parameterization is a poor low fidelity model for \BPQN, our numerical results in \cref{sec:res-offline} showed that $p=4$ was a better choice than $p=3$. We present this to show that for many coarser discretizations we have guaranteed error bounds relating them to the high fidelity problem.

\section{Bifidelity Proximal Quasi-Newton} \label{app:bpqn}

\subsection{High Fidelity Secant Information}
The quasi-Newton update used to update the low fidelity matrix was the BFGS update, but other quasi-Newton updates are possible. The secant information corresponding to matrices of the form \(\mat{S}_{\text{high}}, \mat{Y}_{\text{high}}\) such that \(\A\mat{S}_{\text{high}} = \mat{Y}_{\text{high}}\) was not stored. Instead, the block updates were stored via unrolling as discussed in \cite{NocedalWright2006}. Other methods of storing the quasi-Newton updates are possible, such as densely or a compact representation \citep{ByrdNocedal1994MathProgram}. In limited memory settings, storing updates via unrolling is less computationally efficient than other competing approaches when memory falls out of the window. However, as the iteration counts were almost always less than or equal to 10, full memory was used, which avoids this issue. The computational cost of the BFGS update is the cost of computing
\begin{equation}
    \label{eq:BFGShigh}
    \B^{(k)}\s^{(k)} = \hat{\A} \s^{(k)} + \U^{(k)}(\U^{(k)})^T\s^{(k)} -  \V^{(k)}(\V^{(k)})^T\s^{(k)}
\end{equation}
plus \(\mathcal{O}(n)\). Thus, the total cost of applying $\B^{(k)}$ amounts to the cost of a low fidelity MVP plus \(\mathcal{O}(nk)\) where \(k\) is the current number of high fidelity secant conditions available; this is dominated by the cost of the low fidelity MVP. The low fidelity evaluation can be cached and reused which will be discussed in \cref{sec:bpqn:acrosssubs}. The total memory complexity is \(\mathcal{O}(nk)\).

\subsection{Across Subproblem Secant Conditions}
\label{sec:bpqn:acrosssubs}
When \cref{alg:pqn} terminates it does not update the secant conditions for that iteration as it is not of use. This is changed when \cref{alg:pqn} is called from \cref{alg:bpqn} as the secant conditions will be used in a future subproblem solve. In \cref{eq:BFGShigh}, an evaluation of \(\hat{\A}\s^{(k)}\) is needed where \(\s^{(k)} = \x^{(k + 1)} - \xk\). As the initial starting point of the \cref{alg:pqn} for the subproblem is \(\x^{(k)}, \hat{\A}\x^{(k)}\) has been computed. Similarly, the ending point of \cref{alg:pqn} for the subproblem is a low fidelity evaluation of \(\hat{\x}^{(k)}\). Thus one can compute 
\begin{equation*}
    \hat{\A}\s^{(k)} = \eta\left( \hat{\A}\hat{\x}^{(k)} - \hat{\A}\x^{(k)}\right)
\end{equation*}
for the needed low fidelity evaluation. 

\section{Initialization Procedure} \label{app:initConf}

For our implementation, pairs of particles are considered in the LCP if their pairwise distance is within a specified tolerance; see Eq.~\eqref{eq:collisional-set}. The goal of our numerical experiments is to isolate the computational improvement of our collision resolution algorithms. Thus, it was necessary to create an initial configuration $\CC$ where the particles are not touching yet, but collisions are possible after just a few time steps. Recall, the set $\AA$ contains all the active contact pairs for a particular configuration; here we write $\AA(\gamma,\CC)$ to denote the set's explicit dependence on the particle configuration $\CC$ and the scaling factor $\gamma$. We create this initial configuration using Algorithm~\ref{algo:initProc}.

\begin{algorithm}[ht]
\begin{algorithmic}
\Procedure{InitializeConfiguration}{$m, \Delta \x, \varepsilon_\mathrm{x}, n_\mathrm{desired}, \varepsilon_\mathrm{desired}$} 
    \State \(\CC \gets \) Initialized on a lattice of size $m\times m \times m$ centered at the origin where the center points are spaced $\Delta \x$ apart
    \State \(\CC \gets \CC + \mathscr{E}\) where $\mathscr{E} \sim$ Uniform$\bigl(-\tfrac{\varepsilon_\mathrm{x}}{2}, \tfrac{\varepsilon_\mathrm{x}}{2}\bigr)$
    \State Set $\gamma^-=0, \gamma = 1, \gamma^+=1$
    \While{$\lvert\AA(\gamma^+,\CC)\rvert > 0$} 
        \State $\gamma^+\gets 2\times \gamma^+$
    \EndWhile
    \While{$n_\mathrm{desired} - \varepsilon_\mathrm{desired} > \lvert\AA(\gamma ,\CC)\rvert  $ or $\lvert\AA(\gamma ,\CC)\rvert > n_\mathrm{desired} +  \varepsilon_\mathrm{desired}$}
        \If{$n_\mathrm{desired} - \varepsilon_\mathrm{desired} > \lvert\AA(\gamma ,\CC)\rvert $} 
            \State $\gamma^+ \gets \gamma$ 
            \State $ \gamma \gets \gamma + \frac{\gamma^+ - \gamma}{2}$
        \Else  
            \State $\gamma^- \gets \gamma$ 
            \State $ \gamma \gets \gamma^- + \frac{\gamma - \gamma^-}{2}$
        \EndIf
    \EndWhile
    \State \Return \(\gamma, \CC\)
\EndProcedure
\end{algorithmic}
\caption{Initializing a configuration with a set number of potential collisions.} \label{algo:initProc}
\end{algorithm}

The key observation is that because the configuration is centered at the origin, then we defined the function $\lvert\AA(\cdot ,\CC)\rvert: \mathbb{R}^+ \rightarrow \mathbb{N}$ which computes the number of pairs to consider in the LCP; notice, $\lvert\AA(\cdot ,\CC)\rvert$ is monotonic, non-increasing, and bounded from below by zero and from above by ${m^3 \choose 2}$ . Thus, first one can increase $\gamma^+$ until one finds a value such that $\lvert\AA(\gamma^+ ,\CC)\rvert = 0$. Also, a priori it is obvious that $\lvert\AA(0 ,\CC)\rvert = {m^3 \choose 2} = \mathcal{O}(m^6)$. This corresponds to the nonphysical configuration where all the particles are on top of each other. \Cref{algo:initProc} performs a bisection search on the scalar $\gamma$ to find a configuration where the initial size of the LCP is in $[n_\mathrm{desired} - \varepsilon_\mathrm{desired}, n_\mathrm{desired} + \varepsilon_\mathrm{desired}]$.

We found good results by setting $\varepsilon_x = 3$, $n_\mathrm{desired} = \tfrac{m^3}{2}$, and $\varepsilon_\mathrm{desired} = \tfrac{m^3}{10}$.

\end{document}